\newtheorem {theorem} {Theorem}
\newtheorem {proposition} [theorem]{Proposition}
\newtheorem {corollary} [theorem]{Corollary}
\newtheorem {lemma}  [theorem]{Lemma}
\newtheorem {remark} [theorem]{Remark}
\begin{document}

\title[Perturbation of    $Q_4$]
{On the number of  limit cycles in quadratic  perturbations of quadratic codimension four   centers}

\author[ Yulin Zhao]{ Yulin Zhao}

\address{ Department of Mathematics, Sun Yat-sen University, Guangzhou, 510275, People's Republic of China.}

\email{mcszyl@mail.sysu.edu.cn}

\thanks{Supported by  NSF of China (No. 10871214) and Program for New Century Excellent Talents in University.}


\keywords{Limit cycle, Abelian integral, Codimension-four   centers }
\date{}
\dedicatory{}

\maketitle

\maketitle

\begin{abstract}
This paper is concerned with the bifurcation of limit cycles in
general quadratic perturbations of quadratic codimension-four
centers $Q_4$. Gavrilov and Iliev set an upper bound of {\it eight}
for the number of limit cycles produced from the period annulus
around the center.  Based on Gavrilov-Iliev's proof, we prove in
this paper  that the perturbed system has at most five  limit cycles
which emerge from the period annulus around the center. We also show
that there exists a perturbed system with three limit cycles
produced by the period annulus of $Q_4$.
\end{abstract}

\section{Introduction and statement of the main results}
In this paper we study the bifurcation of limit cycles in plane quadratic systems under small quadratic perturbations. We assume that
the unperturbed system has at least one center. Taking a complex coordinate $z=x+iy$ and using the terminology from \cite{Zo}, the list
of quadratic centers at $z=0$ looks as follows:
\begin{equation*}
\begin{array}{ll}
\dot z=-iz-z^2+2|z|^2+(b+ic){\bar z}^2,&\mathrm{Hamiltonian}\,(Q_3^H),\\
\dot z=-iz+az^2+2|z|^2+b{\bar z}^2,&\mathrm{reversible}\,(Q_3^R),\\
\dot z=-iz+4z^2+2|z|^2+(b+ic){\bar z}^2,&|b+ic|=2,\,\mathrm{codimension\,\, four}\,(Q_4),\\
\dot z=-iz+z^2+(b+ic){\bar z}^2,&\mathrm{generalized\,\, Lotka-Volterra}\,(Q_3^{LV}),\\
\dot z=-iz+{\bar z}^2,&\mathrm{Hamiltonian\,\, triangle},
\end{array}
\end{equation*}
where $a,b,$ and $c$ stand for arbitrary real constant. Let
\begin{equation}\label{eq1}
\dot x=\dfrac{H_y(x,y)}{M(x,y)},\,\,\,\,\dot y=-\dfrac{H_x(x,y)}{M(x,y)},
\end{equation}
be any of the above systems rewritten in $(x,y)$ coordinates. Here $H(x,y)$ is a first integral of system
\eqref{eq1} with the integrating factor $M(x,y)$. Consider a small quadratic perturbations of \eqref{eq1}:
\begin{equation}\label{eq2}
\dot x=\dfrac{H_y(x,y)}{M(x,y)}+\epsilon X_2(x,y,\epsilon),\,\,\,\,\dot y=-\dfrac{H_x(x,y)}{M(x,y)}+\epsilon Y_2(x,y,\epsilon),
\end{equation}
where $X_2(x,y,\epsilon)$ and $Y_2(x,y,\epsilon)$ are  quadratic polynomials in $x$ and $y$ with coefficients depending
analytically on the small parameter $\epsilon$.

Each center of system \eqref{eq1} is surrounded by a continuous set of period annuli.  Compactifying   the phase plane $\mathbb{R}^2$ of system \eqref{eq1} to the
Poincar\'e disc, the boundary of the period annulus of the center has two connected
components, the center itself and a singular loop which consists orbit(s) and at least one singularity.
It is well known that the  limit cycles of system \eqref{eq2} can
emerge from

(a) the center (i.e.the inner boundary),

(b) the  singular loop (i.e.,the outer boundary),

(c) the period annulus.

Bautin\cite{B}  found that at most three limit cycles can appear near a focus or a center of any quadratic system. This implies that
the cyclicity of the center of quadratic system is equal to three under quadratic perturbation. As usual, we
use the notion of {\it cyclicity} for the total number of limit cycles which can
emerge from a configuration of trajectories (center, period annulus, a singular loop) under a perturbation.

The  bifurcation of limit cycles from saddle-loop  in perturbations of quadratic
Hamiltonian systems has been studied in \cite{HI1}. Moreover, if the loop contains only one saddle and
under certain genericity conditions, it was proved in \cite{M}
that the cyclicity  of a singular loop can be transferred to the cyclicity of the period annuli.
However, if the loop contains at least two saddles, this transfer in general is not true. For more details, we refer to
\cite{LR} and references therein.

The cyclicity of the period annulus of system \eqref{eq1}, also known as the (extended) infinitesimal 16th Hilbert problem\cite{A} for $n=2$,
was investigated by many authors.  This problem is
reduced to counting the number of zeros of the displacement function
\begin{equation}\label{eq3}
 d(h, \epsilon) = \epsilon M_1(h)+ \epsilon^2 M_2(h)+\cdots+M_k(h)+\cdots,
\end{equation}
where $d(h, \epsilon)$ is defined on a section to the flow, which is parameterized
by the Hamiltonian value $h$. The number of zeros of the first non-vanishing Melnikov function $M_k(h)$
determines the upper bound of the number of limit cycles in \eqref{eq2} emerging
from the periodic annulus of the unperturbed integrable  system \eqref{eq1}. The corresponding Melnikov functions were determined in \cite{I1}
for quadratic centers.

The cyclicity of the period annulus for quadratic Hamiltoinian
$Q_3^H$ and Hamiltoinian triangle, were completely solved by several
authors. See \cite{CLLZ,G,HI2,I1,I2,ZLL,ZZ} and references therein.
The  generalized Lotka-Volterra $Q_3^{LV}$  has been studied by
Zoladek in \cite{Zo}. Some results concerned with certain specific
case of $Q_3^R$ can be found in  \cite{CJ,DLZ,ILY,LZ} etc. However,
almost nothing is known about the generic reversible case $Q_3^R$.
Recently, the authors of the paper \cite{GGI} propose a program for
finding the cyclicity of period annuli of quadratic centers of genus
one.  Garu, Manosan and Villadelprat \cite{GMV}
 have also some new results in this direction.

 The present paper deals with the cyclicity of period annulus of quadratic codimension four centers $Q_4$. Using Picard-Fuchs equations and
 Petrove's method(based on the argument principle)\cite{P}, Gavrilov and Iliev\cite{GI} proved that the cyclicity of period annulus of $Q_4$
 is less or equal to {\it eight}, see Theorem \ref{t2} in Section 2 below. Based on their proof, we get the following theorem in this paper.
 \begin{theorem}\label{t1}
 Let system \eqref{eq1} be a quadratic codimension four system $Q_4$ rewritten in $(x,y)$ coordinates. Then the perturbed quadratic
 system \eqref{eq2} has at most five limit cycles which emerge from the period
annulus around the center. Moreover, there exists the quadratic polynomials
 $X_2(x,y,\epsilon)$ and $Y_2(x,y,\epsilon)$ such that system \eqref{eq2} has at least three limit cycles produced by the period annulus of system
 \eqref{eq1}.
\end{theorem}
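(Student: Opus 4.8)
The proof naturally splits into an upper-bound part and a lower-bound part. For the upper bound (at most five limit cycles), the starting point is Gavrilov--Iliev's Theorem~\ref{t2}, which already provides an explicit finite-dimensional space of Abelian integrals together with the Picard--Fuchs system it satisfies. The idea is to re-examine their argument-principle estimate more carefully. Write the first Melnikov function $M_1(h)$ as a combination $M_1(h)=\alpha_0 I_0(h)+\alpha_1 I_1(h)+\dots$ of the generating integrals used in \cite{GI}, reduce modulo the Picard--Fuchs relations to a minimal generating set, and introduce the Wronskian-type or ratio function whose zeros control the zeros of $M_1$. Gavrilov and Iliev bound the variation of the argument of this function along the boundary of a suitable complex domain by a quantity that yields $8$; the goal is to sharpen the count at the two endpoints of the period-annulus interval (the center $h=h_c$ and the singular loop $h=h_s$), where the behaviour of the integrals is governed by their local expansions. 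Near the center one uses the analytic expansion of $I_j$ in $h-h_c$ and the known first few Bautin-type coefficients; near the loop one uses the asymptotic expansion (with possible logarithmic terms) of the $I_j$. Combining an improved boundary estimate with the differential equation, one should be able to shave the bound from $8$ down to $5$.

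\smallskip

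For the lower bound (at least three limit cycles), the plan is the standard Melnikov/Bautin approach. Choose coordinates so that the vanishing cycle family is explicit, compute $M_1(h)$ for a general quadratic perturbation as a linear combination $M_1(h)=\sum_j \beta_j J_j(h)$ of the independent Abelian integrals; the coefficients $\beta_j$ depend linearly (or polynomially) on the perturbation parameters. One then shows that the map from perturbation parameters to the vector of leading Taylor coefficients of $M_1$ at an interior point $h_0$ of the period annulus is surjective onto at least a $3$-dimensional space, equivalently that three of the functions $J_j(h)$ are linearly independent and independently controllable. Then an appropriate choice of parameters forces $M_1$ to have three simple zeros in $(h_c,h_s)$, and by the standard argument each simple zero of the first nonvanishing Melnikov function persists as a limit cycle of \eqref{eq2} for $\epsilon$ small.

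\smallskip

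Concretely, I would: (i) recall from \cite{GI} the explicit first integral $H$, integrating factor $M$, and the space $\mathcal{A}$ spanned by the relevant Abelian integrals, together with the Picard--Fuchs system; (ii) reduce $M_1$ to a minimal generating set and set up the complexified ratio/Wronskian function $W(h)$; (iii) estimate the increment of $\arg W$ along the boundary of a cut complex $h$-plane, using the local data at $h_c$ and $h_s$ to get the sharpened total $\le 5$; (iv) for the lower bound, exhibit an explicit one- or two-parameter subfamily of perturbations for which $M_1$ has three transverse zeros, e.g. by matching signs of $M_1$ at four points of $(h_c,h_s)$.

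\smallskip

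The main obstacle is step~(iii): squeezing the argument-principle count below Gavrilov--Iliev's $8$ requires genuinely new information, not just their framework. The delicate point is controlling the contribution near the singular loop, where the Abelian integrals may carry logarithmic terms, and near the center, where one must show that enough leading Bautin-type coefficients cannot all be made to vanish while keeping extra zeros in the interior — this is exactly where the codimension-four structure of $Q_4$ (as opposed to $Q_3$) must be used, and getting the bookkeeping of these boundary multiplicities tight enough to reach $5$ rather than, say, $6$ or $7$ is the crux of the argument.
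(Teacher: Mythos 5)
The upper-bound half of your proposal has a genuine gap, and you in fact flag it yourself: you never supply the mechanism that takes Gavrilov--Iliev's bound of $8$ down to $5$, and the mechanism is not, as you suggest, a finer argument-principle boundary estimate for the same function. What the paper does is compute the non-homogeneous term $R(h)$ of the equation $L_2\bar G=R$ explicitly (formulas \eqref{R}--\eqref{ab}), write its numerator as $\mathcal F(s)=P_3(s)J_1(s)+Q_2(s)J_2(s)$, and then exploit the fact that $G(s)$ is analytic at the center endpoint $s=\kappa$, which forces the double vanishing $\mathcal F(\kappa)=\mathcal F'(\kappa)=0$ (Lemma \ref{l11}). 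Because of this, one may pass from $\mathcal F$ to $\mathcal F'$ without losing zeros (Proposition \ref{p13}), and by the Picard--Fuchs system \eqref{PF2} the derivative again has the same shape but with the polynomial degrees dropped by one: $\mathcal F'=P_2J_1+Q_1J_2$ with $\deg P_2\le 2$, $\deg Q_1\le 1$. Only now does one count zeros: either by Petrov's argument principle as in Remark \ref{r5} (at most $2+1+1=4$ zeros in the cut plane, one of which sits at $s=\kappa$, hence at most $3$ in $(1,\kappa)$), or, as the paper prefers, by a real-analytic study of $g=\mathcal F'/J_1=P_2+Q_1w$ with $w=J_2/J_1$, using the Riccati equation for $w$ coming from \eqref{PF2} and resultant/Sturm computations to prove $w'>0$, $w''<0$, $w'''>0$ (Lemmas \ref{l15}, \ref{l16}, Proposition \ref{p17}, Theorem \ref{t14}). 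Either way $\#\mathcal F\le 3$ on $(1,\kappa)$, and Proposition \ref{p3} adds the $+2$ from the Chebyshev homogeneous part, giving $\#I\le\#G\le 5$. Your step (iii), "sharpen the count near $h_c$ and $h_s$ for the Wronskian/ratio function," does not contain this algebraic reduction of the non-homogeneous term, and since you state explicitly that you do not see how to get below $8$ within the GI framework, the crux of the theorem is missing from the proposal.

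The lower-bound half of your plan is essentially what the paper does in Corollary \ref{c9}: expand the generating function $I(s)$ at the center endpoint $s=\kappa$, verify that the linear map $(\mu_1,\dots,\mu_4)\mapsto(\nu_1,\dots,\nu_4)$ to the leading expansion coefficients \eqref{eq35} is invertible (its determinant is $125/(472392\kappa^8(\kappa-1)^4)\neq 0$), and then choose $0<|\nu_1|\ll|\nu_2|\ll|\nu_3|\ll|\nu_4|$ with alternating signs to force three zeros of $I$ near $s=\kappa$, which persist as limit cycles. Your sketch is compatible with this, but the substantive step there is precisely the independence/nondegeneracy computation, which the proposal leaves unexecuted; note also that for $Q_4$ one must work with the generating function of \cite{I1} (the first non-vanishing Melnikov function), not automatically with $M_1$ of an arbitrary quadratic perturbation.
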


The rest of this paper is organized as follows.  In Section 2 we
briefly sketch the proof of Gavrilov- Iliev's Theorem, which is
crucial for our analysis.
 In Section 3 the  explicit forms of several
related functions  are given by revisiting  Gavrilov- Iliev's proof,
and then we get the asymptotic expansions for these functions in Section 4.
Theorem \ref{t1} is proved in Section 5. Finally we give some comments in Section 6.

\section{A sketch of proof of  Gavrilov-Iliev's theorem}
 Gavrilov and Iliev proved the following theorem:
 \begin{theorem}\label{t2}\cite{GI}
 The cyclicity of the open period annulus surrounding the center of any generic codimension four plane quadratic system
 is less than or equal to eight.
 \end{theorem}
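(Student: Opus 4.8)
The plan is to bound the cyclicity of the open annulus by the maximal number of isolated zeros of the first nonvanishing Melnikov function, and then to control that number by combining the Picard-Fuchs system of the relevant Abelian integrals with Petrov's argument-principle method.

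First I would write the $Q_4$ system \eqref{eq1} together with its Darboux first integral $H$ and integrating factor $M$, and parameterize the period annulus by the level value $h\in(h_0,h_1)$, where $h_0$ corresponds to the center and $h_1$ to the singular loop on the Poincar\'e disc. For a generic quadratic perturbation the displacement function \eqref{eq3} has leading term $M_1(h)=\oint_{\gamma_h}\omega$, with $\gamma_h=\{H=h\}$ and $\omega$ the one-form determined by $X_2,Y_2$ and $M$; genericity guarantees $M_1\not\equiv 0$. Since the cyclicity of the \emph{open} annulus is bounded by the number of isolated zeros of $M_1$ on $(h_0,h_1)$ counted with multiplicity, it suffices to bound that number uniformly over the perturbation.

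Next, expanding $\omega$ in monomials and exploiting the Darboux structure, I would reduce $M_1$ to a linear combination $M_1(h)=\sum_{j}\alpha_j I_j(h)$ of finitely many basic integrals $I_j(h)=\oint_{\gamma_h}x^{p_j}y^{q_j}\,dx/M$, with the coefficients $\alpha_j$ ranging over all admissible perturbation parameters. Differentiating in $h$ and integrating by parts along $\gamma_h$ then yields a first-order linear Picard-Fuchs system $\mathbf I'(h)=A(h)\,\mathbf I(h)$ with $A$ rational, whose only finite singularities are $h_0,h_1$ and the critical values coming from $H$ and $M$. From this system one extracts a Riccati equation for a suitable ratio of the generating integrals, equivalently a second-order linear ODE for a single one.

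Finally, following Petrov, I would continue $\mathbf I$ analytically into the complex $h$-plane cut along $(-\infty,h_0]$ and $[h_1,+\infty)$ and count the zeros inside a large contour by the variation of the argument of $M_1$ along its boundary. This variation splits into contributions from the endpoints $h_0,h_1$, from infinity, and from the two sides of the cuts, each controlled through the Picard-Fuchs system and the precise leading asymptotics of the $I_j$ (power series as $h\to h_0$, logarithmic terms as $h\to h_1$); summing these contributions is what produces the bound eight. The hard part will be twofold: bounding the argument variation on the non-endpoint portions of the contour, so that the ratio governed by the Riccati equation is shown not to oscillate beyond the predicted amount, and extracting the sharp endpoint multiplicities from the logarithmic behavior at the singular loop. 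The explicit invariant conics and lines of $Q_4$ are exactly what make the coefficients of $A(h)$ concrete enough to carry this analysis through to the sharp constant.
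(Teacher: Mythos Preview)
Your outline has the right general flavor (Picard--Fuchs plus a Petrov-type argument), but it misses the specific structural reduction that makes the Gavrilov--Iliev proof go through, and the direct application of the argument principle to $M_1$ that you describe would not yield the bound eight.

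The generating function for $Q_4$, see \eqref{eq4}, is a combination of the integrals $I_{0,0},I_{1,0},I_{0,1}$ \emph{and} $I_{-1,0},I_{-1,1}$, the last two carrying a pole of the integrand along $x=0$. These do not live in the same two-dimensional Picard--Fuchs module as the others (the full system \eqref{PF1} is six-dimensional), so a straight Petrov argument on $M_1$ with two cuts would have to control the monodromy of all of these simultaneously; you give no mechanism for that, and there is no reason the argument variation along your contour should sum to anything like eight. The actual proof avoids this obstacle by two non-obvious steps. First, one passes from $I(h)$ to $\bar G(h)=hI'(h)-I(h)$, which has at least as many zeros (see \eqref{eq8}). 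Second, and crucially, $\bar G$ satisfies a \emph{second-order} inhomogeneous equation $L_2\bar G=R$ in which the right-hand side $R$ involves only $I_{0,0}'$ and $I_{1,1}'$: the troublesome $I_{-1,*}$ terms are eliminated by the operator $L_2$. The homogeneous solutions of $L_2$ form a Chebyshev space (Proposition~\ref{p3}), so $\#\bar G\le \#R+2$. Only at this point is Petrov's method invoked, and only on the clean combination $P_3(s)J_1(s)+Q_2(s)J_2(s)$ in a domain with a \emph{single} cut $(-\infty,1]$ after the substitution $s=9\kappa h^2/4$; this gives $\#R\le 6$ (Proposition~\ref{p4}), hence $\#I\le\#\bar G\le 8$.

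In short, the pieces you are missing are: the intermediate function $\bar G$; the second-order operator $L_2$ with Chebyshev kernel; and the key algebraic fact that $L_2\bar G$ lands in the two-dimensional span of $J_1,J_2$, which is what makes the Petrov count finite and explicit. Your proposed Riccati/second-order ODE is for a single basic integral, not for the full perturbation-dependent combination, and your contour with two cuts is not the one actually used.
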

 We are going to sketch the proof of  Theorem \ref{t2}.

 It is well known that the cyclicity of period annulus can be detected in a compact region by the number of zeros of the first non-vanishing
$k$-th order Melnikov function $M_k(h)$  in \eqref{eq3}, which is sometimes called generating function.  The generating function for $Q_4$ is given in
\cite{I1} by a complete elliptic integral. After a series of changes, the generating function becomes
\begin{equation}\label{eq4}
I(h)=\mu_1hI_{0,0}+\mu_2I_{1,0}+\mu_3I_{0,1}+\mu_4(2I_{-1,0}+3\kappa hI_{-1,1}),
\end{equation}
where
\begin{equation}
I_{i,j}(h)=\iint_{H(x,y)<h}x^iy^jdxdy,\,\,\,\,\,h\in\left(-\dfrac{2}{3},-\dfrac{2}{3\sqrt{\kappa}}\right),
\end{equation}
with
\begin{equation}\label{eq6}
H(x,y)\equiv \dfrac{2}{3}(\kappa-1)x^3-(\kappa-1)x^2y+\dfrac{\kappa}{3}y^3-y=h,\,\,\,\,\,\kappa>1.
\end{equation}
The integrals $I_{i,j}$ in \eqref{eq4} satisfy the following Picar-Funchs system
\begin{equation}\label{PF1}
\begin{array}{ccl}
I_{0,0}&=&\dfrac{3h}{2}I_{0,0}'+I_{0,1}',\\[2ex]
I_{1,0}&=&hI_{1,0}'+\dfrac{2}{3}I_{1,1}',\\[2ex]
I_{0,1}&=&\dfrac{2}{3\kappa}I_{0,0}'+hI_{0,1}'+\dfrac{2(\kappa-1)}{3\kappa}I_{1,1}',\\[2ex]
I_{1,1}&=&\dfrac{3h}{8}I_{0,0}'+\dfrac{1}{2}I_{1,0}'+\dfrac{1}{4}I_{0,1}'+\dfrac{3h}{4}I_{1,1}',\\[2ex]
I_{-1,0}&=&3hI_{-1,0}'+2I_{-1,1}',\\[2ex]
I_{-1,1}&=&\dfrac{\kappa-1}{\kappa}I_{1,0}'+\dfrac{1}{\kappa}I_{-1,0}'+\dfrac{3h}{2}I_{-1,1}'.\\[2ex]
\end{array}
\end{equation}
By using the above system, the authors get
\begin{equation}\label{eq8}
\dfrac{d}{dh}\left(\dfrac{I(h)}{h}\right)=\dfrac{hI'(h)-I(h)}{h^2}=\dfrac{\bar G(h)}{h^2},
\end{equation}
where
\begin{equation}\label{G}
\bar G(h)=(\mu_1h^2+\mu_3)I_{0,0}'+\mu_2I_{1,1}'+\mu_4[-4hI_{-1,0}'+(3\kappa
h^2-4)I_{-1,1}'].
\end{equation}
Therefore
\begin{equation*}
I(h)=h\int_{-2/3}^h\xi^{-2}\bar G(\xi)d\xi
\end{equation*}
and $I(h)$ has at most as much zeros as $\bar G(h)$.  It is  proved  in \cite{GI}  that $\bar G(h)$ satisfies the following
equation
\begin{equation}\label{eq10}
L_2(h)\bar G(h)=R(h)\triangleq \dfrac{h((\bar a_0+\bar a_1 h^2+\bar a_2 h^4+\bar a_3 h^6)I_{0,0}'+(\bar b_0+\bar b_1 h^2+\bar b_2 h^4)I_{1,1}')}{(9h^2-4)^2(9\kappa h^2-4)},
\end{equation}
where
\begin{equation}\label{L2}
L_2(h)=5\kappa h-(9\kappa h^2-8)\dfrac{d}{dh}+h(9\kappa h^2-4)\dfrac{d^2}{dh^2}.
\end{equation}

Taking the changes
\begin{equation}\label{eq12}
h=-\dfrac{2}{3}\sqrt{\dfrac{s}{\kappa}},\,\,J_1(s)=I_{0,0}'(h(s)),\,\,J_2(s)=I_{1,1}'(h(s)),\,\,\,G(s)=\bar G(h(s)),
\end{equation}
we obtain the equation
\begin{equation}\label{L2G}
L_2G=\left(s(1-s)\dfrac{d^2}{ds^2}-\dfrac{1}{2}\dfrac{d}{ds}-\frac{5}{36}\right)G(s)=\dfrac{P_3(s)J_1(s)+Q_2(s)J_2(s)}{(s-\kappa)^2(s-1)},
\end{equation}
where $P_3(s)$ and $Q_2(s)$ are real polynomials of degree at most three and two. Denote by dot the differentiation with respect to $s$. The integrals
$J_1(s)$ and $J_2(s)$ satisfy the following Picard-Fuchs equation
\begin{equation}\label{PF2}
6(s-1)(s-\kappa)\left(\begin{array}{c}\dot J_1\\ \dot J_2\end{array}\right)=\left(\begin{array}{cc}1-s&\kappa-1\\ 1-s&s-1\end{array}\right)
\left(\begin{array}{c}J_1\\ J_2\end{array}\right).
\end{equation}

We say that $V$ is a Chebyshev space, provided that each non-zero function in $V$ has at most $\mathrm{dim}(V)-1$ zeros, counted with multiplicity.
\begin{proposition}\label{p3}\cite{GI}  The following statements hold:
\begin{itemize}
\item[(i)]
Suppose the solution space of the homogeneous equation $x''+a_1(t)x'+a_2(t)x=0$ is a Chebyshev space and let $R(t)$ be an analytic function
on $(a, b)$ having $k$ zeros (counted with multiplicity). Then every solution $x(t)$ of the non-homogeneous equation
\begin{equation}
x''+a_1(t)x+a_2(t)x = R(t)
\end{equation}
has at most $k +2$ zeros on $(a,b)$.
\item[(ii)] The solution space $S$ associated to the differential operator $L_2(h)$, defined in \eqref{L2},
is a Chebyshev space.
\end{itemize}
\end{proposition}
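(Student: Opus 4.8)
The plan is to establish the two parts by separate, essentially classical, arguments. Part (i) is a reduction-of-order estimate; write $\mathcal{L}x:=x''+a_1(t)x'+a_2(t)x$. First observe that a two-dimensional Chebyshev space necessarily contains a nowhere-vanishing element on $(a,b)$: for a basis $\{x_1,x_2\}$ the Wronskian $W(x_1,x_2)$ is nowhere zero (otherwise some nontrivial combination would have a double zero, contradicting the Chebyshev property), hence $x_2/x_1$ is injective on $\{x_1\neq 0\}$, and a short case analysis on the monotone branches of this ratio produces a combination with no zero at all. Fix such a zero-free solution $x_1$ of $\mathcal{L}x_1=0$ and write the desired solution of $\mathcal{L}x=R$ as $x=x_1y$; then, since $\mathcal{L}x_1=0$, the equation collapses to the first-order equation $v'+\tilde p\,v=\tilde R$ in $v=y'$, where $\tilde p=2x_1'/x_1+a_1$ and $\tilde R=R/x_1$, and $\tilde R$ has exactly the $k$ zeros of $R$ because $x_1$ is zero-free. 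Multiplying by the positive integrating factor $E=\exp\!\int\tilde p$ gives $(Ev)'=E\tilde R$, which has $k$ zeros, and Rolle's theorem bounds zeros at each successive stage: $Ev$, hence $v=y'$, has at most $k+1$; $y$ has at most $k+2$; and so $x=x_1y$ has at most $k+2$ zeros on $(a,b)$, all counted with multiplicity (legitimately so in the analytic setting at hand).

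For part (ii) I would first make $L_2$ explicit as a hypergeometric operator. By the change of variable $h=-\tfrac23\sqrt{s/\kappa}$ of \eqref{eq12}, the homogeneous equation $L_2x=0$ becomes, according to \eqref{L2G}, the Gauss hypergeometric equation
\begin{equation*}
s(1-s)\,\ddot x-\tfrac12\,\dot x-\tfrac{5}{36}\,x=0\qquad(\text{dot}=d/ds),
\end{equation*}
whose parameters are forced by $a+b+1=0$, $c=-\tfrac12$, $ab=\tfrac{5}{36}$, that is $(a,b,c)=\bigl(-\tfrac16,-\tfrac56,-\tfrac12\bigr)$. The relevant interval $h\in\bigl(-\tfrac23,-\tfrac{2}{3\sqrt\kappa}\bigr)$ corresponds to $s\in(1,\kappa)$, which lies strictly between the regular singular points $s=1$ and $s=\infty$ of this equation; so the assertion that $S$ is Chebyshev is genuinely a disconjugacy statement on $(1,\kappa)$, and is not automatic.

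The crux is to produce a zero-free solution. On $(1,\infty)$ the equation carries the Kummer solution at infinity
\begin{equation*}
x_1(s)=s^{1/6}\,{}_2F_1\!\left(-\tfrac16,\tfrac43;\tfrac53;\tfrac1s\right),
\end{equation*}
and I claim it is positive there. For $u=1/s\in(0,1)$ the Pochhammer symbol $(-\tfrac16)_n$ is negative for every $n\ge1$ while $(\tfrac43)_n$, $(\tfrac53)_n$, $n!$ are positive, so every Taylor coefficient of ${}_2F_1(-\tfrac16,\tfrac43;\tfrac53;u)$ beyond the constant term $1$ is negative; hence this function is strictly decreasing on $[0,1)$, and by Gauss's summation formula (valid since $c-a-b=\tfrac12>0$) its limit as $u\to 1^-$ equals $\Gamma(\tfrac53)\Gamma(\tfrac12)\,/\,\bigl(\Gamma(\tfrac{11}{6})\Gamma(\tfrac13)\bigr)>0$, so it stays positive on $[0,1)$. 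Pulling back gives a nowhere-vanishing solution $x_1$ of $L_2x=0$ on the whole $h$-interval. Finally, for an independent second solution $x_2$ the derivative $(x_2/x_1)'=W(x_1,x_2)/x_1^{2}$ is nowhere zero by Abel's identity, so $x_2/x_1$ is strictly monotone and every nontrivial solution $\alpha x_1+\beta x_2=x_1(\alpha+\beta\,x_2/x_1)$ has at most one zero; hence $S$ is a Chebyshev space.

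The substantive difficulty is concentrated in part (ii): part (i) is a routine though indispensable application of Rolle's theorem, whereas proving $S$ Chebyshev amounts to disconjugacy on an interval between two regular singular points, which need not hold in general. What rescues it here is that the substitution turns $L_2x=0$ into a hypergeometric equation with these very particular parameters, for which a positive solution can be written down and its sign read off a power series; failing such luck one would be pushed into a considerably more laborious argument-principle (Petrov-type) count of the complex zeros of the solutions of $L_2x=0$, which is presumably how \cite{GI} proceed.
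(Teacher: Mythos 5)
Your proof is correct, but there is no in‑paper proof to compare it with: Proposition \ref{p3} is quoted from \cite{GI}, and this paper only sketches the argument‑principle proof of Proposition \ref{p4} (Remark \ref{r5}), not of Proposition \ref{p3}. On the merits: part (i) is the classical division--Rolle argument; the one step you leave as a sketch, that a two‑dimensional Chebyshev (i.e.\ disconjugate) solution space on an open interval contains a zero‑free solution, is a standard fact and your outline completes easily (if $x_1$ has its unique zero at $t_1$, a value $\lambda$ attained by $x_2/x_1$ on both sides of $t_1$ would give $x_2-\lambda x_1$ two zeros, so the two monotone branch ranges are disjoint and any $\lambda$ between them works); the reduction $x=x_1y$, $(Ev)'=ER/x_1$ and two Rolle counts with multiplicity then give exactly $k+2$. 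Part (ii) also checks out: under $h=-\frac23\sqrt{s/\kappa}$ one finds $L_2=24\sqrt{\kappa s}\,\bigl(s(1-s)\frac{d^2}{ds^2}-\frac12\frac{d}{ds}-\frac{5}{36}\bigr)$, so the positive prefactor is harmless, the parameters $(a,b,c)=\bigl(-\frac16,-\frac56,-\frac12\bigr)$ are as you state, the Kummer solution $s^{1/6}\,{}_2F_1\bigl(-\frac16,\frac43;\frac53;1/s\bigr)$ has all coefficients beyond the constant term negative and a positive Gauss limit at $1/s=1$ (since $c-a-b=\frac12>0$), hence is positive on $(1,\infty)\supset(1,\kappa)$, and the monotonicity of $x_2/x_1$ (Abel) yields the Chebyshev property, zeros being simple. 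Compared with the complex‑analytic Petrov‑type machinery that the surrounding text associates with \cite{GI}, your explicit positive hypergeometric solution gives an elementary, real‑variable disconjugacy proof of (ii), which is self‑contained and arguably cleaner; its only cost is reliance on the specific hypergeometric parameters, whereas the argument‑principle approach is more robust under changes of the operator.
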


Therefore,  we firstly estimate the number of zeros of $R(h)$. Let
\begin{equation*}
V_n=\{P_nJ_1(s)+Q_{n-1}J_2(s):P_n,Q_{n-1}\in\mathbb{R}[s],\,\,\mathrm{deg}P_m,Q_m\leq m\}
\end{equation*}

\begin{proposition}\label{p4}\cite{GI}
The vector space $V_n$ is Chebyshev on the interval $(1,\kappa)$: each element has at most $\mathrm{dim} V_n - 1 = 2n$ zeros (counted
with multiplicity).
\end{proposition}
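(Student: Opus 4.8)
\textbf{Proof proposal for Proposition \ref{p4}.}

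The plan is to prove this by induction on $n$, using the Picard--Fuchs system \eqref{PF2} to control how the dimension of $V_n$ grows and to relate zeros of an element of $V_n$ to zeros of elements of $V_{n-1}$ (or a space of comparable dimension) via a derivative-and-Wronskian argument. First I would set up the base case: by \eqref{PF2} the pair $(J_1,J_2)$ spans a two-dimensional space of solutions of a second-order linear ODE, and one checks directly that $V_1 = \{P_1 J_1 + Q_0 J_2\}$ has dimension at most $3$, so the claim for $n=1$ is that any such combination has at most $2$ zeros; this should follow from the fact that $J_1$ and $J_2$ themselves, together with the structure of \eqref{PF2}, form a Chebyshev system on $(1,\kappa)$ — equivalently, that the relevant Wronskian-type determinant does not vanish on the open interval. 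I would verify this non-vanishing using the explicit $2\times 2$ matrix in \eqref{PF2} and the behaviour of $J_1, J_2$ at the endpoints $s=1$ and $s=\kappa$ (where $H$ has its critical values).

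For the inductive step, suppose $V_{n-1}$ is Chebyshev with at most $2n-2$ zeros. Take $f = P_n J_1 + Q_{n-1} J_2 \in V_n$ and assume for contradiction it has at least $2n+1$ zeros on $(1,\kappa)$. The key move is to differentiate: using \eqref{PF2} to express $\dot J_1, \dot J_2$ linearly in $J_1, J_2$ with rational coefficients whose denominators are powers of $(s-1)(s-\kappa)$, one computes that
\begin{equation*}
6(s-1)(s-\kappa)\,\dot f = \tilde{P}_n J_1 + \tilde{Q}_{n-1} J_2
\end{equation*}
for new polynomials $\tilde{P}_n, \tilde{Q}_{n-1}$ of the same degree bounds (one checks the degrees are preserved because the matrix entries in \eqref{PF2} are linear in $s$ and the prefactor is quadratic). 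Since $\dot f$ has at least $2n$ zeros by Rolle between consecutive zeros of $f$, and the prefactor $6(s-1)(s-\kappa)$ has no zero on the open interval $(1,\kappa)$, the element $\tilde{P}_n J_1 + \tilde{Q}_{n-1} J_2 \in V_n$ also has at least $2n$ zeros. Iterating this, or more efficiently combining $f$ and $\dot f$ to eliminate one of $J_1, J_2$, one arrives at a nonzero polynomial combination that must vanish too often, giving the contradiction. The cleanest route is probably to form the determinant
\begin{equation*}
W(s) = f \cdot \bigl(6(s-1)(s-\kappa)\dot g\bigr) - g \cdot \bigl(6(s-1)(s-\kappa)\dot f\bigr)
\end{equation*}
for a suitable auxiliary $g$, reducing to an identity among polynomials and lower elements of the $V_\bullet$ scale.

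I expect the main obstacle to be the bookkeeping in the inductive step: showing that differentiation via \eqref{PF2} does not increase the degrees of the polynomial coefficients beyond the allowed bounds, and that the elimination procedure genuinely lands back inside some $V_m$ with $\dim V_m$ small enough to close the induction rather than merely cycling. One must be careful that the prefactor $(s-1)(s-\kappa)$ introduced at each differentiation can be absorbed (it vanishes only at the endpoints, not in the interior, so it contributes no spurious interior zeros, but it does raise polynomial degrees, and this must be tracked). A secondary technical point is the endpoint analysis: to rule out that all the counted zeros accumulate harmlessly at $s=1$ or $s=\kappa$, I would need the asymptotic behaviour of $J_1, J_2$ near those points, which follows from the indicial equation of \eqref{PF2} — the solutions behave like a constant plus a term with a logarithm or a fractional power, and in particular no solution is identically zero near an endpoint, so counting zeros with multiplicity on the closed interval is legitimate. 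With these two points handled, the Rolle-plus-Wronskian induction should go through and yield the sharp bound $2n = \dim V_n - 1$.
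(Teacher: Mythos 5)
Your plan diverges from the paper's route --- the paper (following \cite{GI}, as sketched in Remark \ref{r5}) counts zeros of $F(s)=(P_nJ_1+Q_{n-1}J_2)/J_1$ by the argument principle in the complex domain $\mathbb{C}\backslash(-\infty,1]$, bounding the winding by $\deg P_n$ on a large circle and by the zeros of $Q_{n-1}$ along the cut, which gives exactly $\#F\leq \deg P_n+\#Q_{n-1}+1\leq 2n$ --- but the real-domain Rolle/Wronskian induction you propose has a genuine gap precisely at the step you flag as ``bookkeeping.'' Using \eqref{PF2}, the coefficient of $J_1$ in $6(s-1)(s-\kappa)\dot f$ is $6(s-1)(s-\kappa)P_n'+(1-s)\bigl(P_n+Q_{n-1}\bigr)$, which has degree $n+1$, not $n$: the quadratic prefactor acting on $P_n'$ (degree $n-1$) and the linear matrix entries acting on $P_n$ both push the degree up. So the derived element lies in $V_{n+1}$, whose Chebyshev bound is $2n+2$, while Rolle only guarantees $2n$ zeros for it; there is no contradiction, and iterating makes things worse (each differentiation loses one zero but raises $\dim V_\bullet$ by two). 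The induction therefore does not close as stated, and this is not a technicality but the central difficulty: some mechanism is needed that strictly decreases the relevant dimension, which naive differentiation through \eqref{PF2} does not provide.

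The proposed repair by elimination is also not carried out and is problematic: combining $f$ and $6(s-1)(s-\kappa)\dot f$ to eliminate $J_2$ produces the Wronskian $J_1\dot J_2-J_2\dot J_1$, which by \eqref{PF2} satisfies $6(s-1)(s-\kappa)\bigl(J_1\dot J_2-J_2\dot J_1\bigr)=(1-s)J_1^2+2(s-1)J_1J_2-(\kappa-1)J_2^2$, i.e. it is not an elementary (polynomial or algebraic) factor that can be divided out, so the elimination does not land in a lower space $V_m$. Finally, the base case is only asserted: $V_1$ is three-dimensional, and its Chebyshev property requires non-vanishing of an extended Wronskian-type determinant for the ordered system $(J_1,sJ_1,J_2)$ (or an equivalent monotonicity/convexity statement for $w=J_2/J_1$), which you do not verify. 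In short, the approach could in principle be salvaged along Petrov-type or Riccati-equation lines for $w$, but as written the key inductive mechanism fails on the degree count and the sharp bound $2n$ is not obtained.
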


\begin{proof}[Proof of Theorem \ref{t2}]
It follows from Proposition \ref{p4} that $P_3(s)J_1+Q_2(s)J_2$ has $6$ zeros in $(1,\kappa)$, and hence $R(h)$ has $6$ zeros
in $(-2/3,-2/(3(\sqrt{\kappa})))$. Finally one gets Theorem \ref{t2}  from Proposition \ref{p3} and \eqref{eq10}.
\end{proof}

\begin{remark}\label{r5} Proposition \ref{p4} is proved by  using argument principle  in the complex domain
$\mathbb{C}\backslash(-\infty,1]$.  The function $J_1(s)$ is a
complete elliptic integral of the first kind and therefore does not
vanish. Let
\begin{equation*}
F (s) =\dfrac{P_n(s)J_1(s)+ Q_{n-1}(s)J_2(s)}{J_1(s)}.
\end{equation*}
 Along the interval $(-\infty, 1)$, the increase of the argument of
$F$ is bounded by the number of zeros of $Q_{n-1}(s)$. Hence we have
\begin{equation}\label{eq16}
\#F(s)\leq \deg P_n(s)+\#Q_{n-1}(s)+1.
\end{equation}
in the the complex domain $\mathbb{C}\backslash(-\infty,1]$, where $\# F(s)$ denotes the number of zeros of $F(s)$.
\end{remark}

\begin{remark}
In the rest of this paper we always suppose $\kappa>1$ unless the opposite is claimed. For proof's convenience  we also suppose
that  $H(x,y)$, defined in \eqref{eq6},  is a first integral of the following system
\begin{equation}\label{eq17}
\dot x=\dfrac{\partial H}{\partial y}=-1-(\kappa-1)x^2+\kappa
y^2,\,\,\,\dot y=-\dfrac{\partial H}{\partial x}=-2(\kappa-1)x(x-y).
\end{equation}
Hence the annulus $\Gamma_h=\{(x,y)|H(x,y)=h\}$ has the negative (clockwise) orientation for $h\in(-2/3,-2/(3\sqrt{\kappa}))$. The Hamiltonian value
$h=-2/3$ and $h=-2/(3\sqrt{\kappa})$ correspond the center $(1,1)$ and the homoclinic loop $\Gamma_{-2/(3\sqrt{\kappa})}$ respectively.
\end{remark}
\section{Some paramilitary   results}

As in the paper \cite{GI}, we introduce the variable $s\in(1,\kappa)$, defined in \eqref{eq12}, and denote by dot the differentiation with respect
to $s$. Taking the changes \eqref{eq12} and
\begin{equation}\label{eq18}
J_3(s)=I_{-1,0}'(h(s)),\,\,J_4(s)=I_{-1,1}'(h(s)),\,\,J_5(s)=I_{1,0}'(h(s)),J_6(s)=I_{0,1}'(h(s)),
\end{equation}
it follows from \eqref{eq4} and \eqref{PF1}  that
\begin{equation}
\begin{array}{lcl}\label{Is}
I(s)=I(h(s))&=&\left(\dfrac{2s}{3\kappa}\mu_1+\dfrac{2}{3\kappa}\mu_3\right) J_1
+\left(\dfrac{2}{3}\mu_2+\dfrac{2(\kappa-1)}{3\kappa}\mu_3\right)J_2\\[2ex]
&&-6\mu_4\sqrt{\dfrac{s}{\kappa}}J_3+(4+2s)\mu_4J_4\\[2ex]
&&-\dfrac{2}{3}\sqrt{\dfrac{s}{\kappa}}\left(\mu_2+3(\kappa-1)\mu_4\right)J_5-\dfrac{2}{3}\sqrt{\dfrac{s}{\kappa}}(\mu_1+\mu_3)J_6.
\end{array}
\end{equation}

Suppose that $I(h)$ is defined as \eqref{eq4}. By direct computation we have that, $\bar G$, defined in \eqref{eq8}, has the form
\begin{equation}\label{eq20}
\bar G(h)=\left(\mu_1h^2-\dfrac{2\mu_3}{3\kappa}\right)I_{0,0}'+\left(-\dfrac{2\mu_2}{3}-\dfrac{2(\kappa-1)\mu_3}{3\kappa}\right)I_{1,1}'
+\mu_4(-4hI_{-1,0}'+(3\kappa h^2-4)I_{-1,1}').
\end{equation}
Here $\bar G(h)$ is different from the one defined in \eqref{G}. However if we take
\begin{equation*}
\widetilde \mu_3=-\dfrac{2\mu_3}{3\kappa},\,\,\widetilde \mu_2=-\dfrac{2\mu_2}{3}-\dfrac{2(\kappa-1)\mu_3}{3\kappa}
\end{equation*}
and omit  the tildes, then we get $\bar G(h)$, defined in \eqref{G}.

For convenience, in what follows we always suppose $\bar G(h)$ is
defined in \eqref{G} unless the opposite is claimed.

We note that $R(h)$, defined in \eqref{L2}, has no explicit form in \cite{GI}. Following the idea in \cite{GI}, a direct calculation then yields
\begin{equation}\label{R}
R(h)=\dfrac{2h(p(h)I_{0,0}'+q(h)I_{1,1}')}{(9h^2-4)^2(9\kappa
h^2-4)},
\end{equation}
where
\begin{equation}\label{pq}
p(h)=(9\kappa
h^2-4)(a_0+a_1h^2+a_2h^4),\,\,q(h)=b_0+b_1h^2+b_2h^4,
\end{equation}
with
\begin{equation}\label{ab}
\begin{array}{ccl}\
a_0&=&64\mu_1+24\mu_2+8(5\kappa+3)\mu_3+32(\kappa-1)\mu_4,\\[2ex]
a_1&=&40(\kappa-9)\mu_1-162\mu_2-18(11\kappa+9)\mu_3+24(\kappa-1)(5\kappa-9)\mu_4,\\[2ex]
a_2&=&18(\kappa+21)\mu_1+243\kappa\mu_2+486\kappa\mu_3+54\kappa(\kappa-1)\mu_4,\\[2ex]
b_0&=&-32((5\kappa-3)\mu_2+3(\kappa-1)\mu_3-4(\kappa-1)\mu_4),\\[2ex]
b_1&=&72(4(\kappa-1)\mu_1+(5\kappa^2+8\kappa-9)\mu_2+3(\kappa-1)(2\kappa+3)\mu_3\\[2ex]
&&+4(\kappa-1)(\kappa-3)\mu_4),\\[2ex]
b_2&=&-54(4(\kappa-1)(2\kappa+1)\mu_1+3\kappa(11\kappa-9)\mu_2+36\kappa(\kappa-1)\mu_3\\[2ex]
&&+12\kappa(\kappa-1)(2\kappa-3)\mu_4).
\end{array}
\end{equation}
Taking the changes \eqref{eq12}, the equation \eqref{L2G} becomes
\begin{equation}\label{eq25}
L_2G=\left(s(1-s)\dfrac{d^2}{ds^2}-\dfrac{1}{2}\dfrac{d}{ds}-\frac{5}{36}\right)G(s)=\dfrac{\kappa\mathcal{F}(s)}{1152(s-\kappa)^2(s-1)},
\end{equation}
where
\begin{equation}\label{F}
{\mathcal F}(s)=P_3(s)J_1(s)+Q_2(s)J_2(s)
\end{equation}
with
\begin{equation}\label{p3q2}
P_3(s)=p\left(-\dfrac{2}{3}\sqrt{\dfrac{s}{\kappa}}\right),\,\,Q_2(s)=q\left(-\dfrac{2}{3}\sqrt{\dfrac{s}{\kappa}}\right).
\end{equation}
Here $P_3(s)$ and $Q_2(s)$ are polynomials in $s$ with $\deg P_3(s)\leq 3,\,\,\deg Q_2(s)\leq 2$. It follows from \eqref{PF2} that
\begin{eqnarray*}
{\mathcal F}'(s)&=& P_2(s)J_1(s)+ Q_1(s)J_2(s),
\end{eqnarray*}
where
\begin{eqnarray*}
P_2(s)&=&P_3'(s)-\dfrac{P_3(s)+Q_2(s)}{6(s-\kappa)}=\alpha_0+\alpha_1 s+\alpha_2 s^2 ,\\[2ex]
 Q_1(s)&=&Q_2'(s)+\dfrac{(\kappa-1)P_3(s)+(s-1)Q_2(s)}{6(s-\kappa)(s-1)}=\beta_1 s-\beta_0,
\end{eqnarray*}
with
\begin{eqnarray*}
\alpha_0&=&\dfrac{128(42 + 13\kappa)}{9 \kappa}\mu_1+\dfrac{16(54 + 13\kappa)}{3\kappa}\mu_2+
\dfrac{32(27 + 38\kappa + 15\kappa^2)}{3\kappa}\mu_3\\[2ex]
&&-\dfrac{128(\kappa-1)(2\kappa-9)}{3\kappa}\mu_4,\\[2ex]
\alpha_1&=&\dfrac{128( -117 - 265\kappa + 30\kappa^2) }{27\kappa^2}\mu_1+\dfrac{16( -174 + 5\kappa) }{3\kappa}\mu_2
-\dfrac{16( 243 + 121\kappa) }{3\kappa}\mu_3\\[2ex]
&&+\dfrac{64( -1 + \kappa)( -119 + 60\kappa) }{9\kappa}\mu_4,\\[2ex]
\alpha_2&=&\dfrac{1088( 21 + \kappa) }{27\kappa^2}\mu_1+\dfrac{544}{\kappa}\mu_2+\dfrac{1088}{\kappa}\mu_3
+\dfrac{1088( -1 + \kappa) }{9\kappa}\mu_4,\\[2ex]
\beta_0&=&-\dfrac{256( -1 + \kappa) }{3\kappa}\mu_1-\dfrac{32( -27 + 25\kappa + 15\kappa^2) }{3\kappa}\mu_2
-\dfrac{16( \kappa-1)( 54 + 31\kappa) }{3\kappa}\mu_3\\[2ex]
&&-\dfrac{64(\kappa-1)( -18 + 5\kappa) }{3\kappa}\mu_4,\\[2ex]
\beta_1&=&-\dfrac{64(\kappa-1)( 18 + 77\kappa) }{27\kappa^2}\mu_1-\frac{16( -111 + 137\kappa) }{3\kappa}\mu_2-\dfrac{768(\kappa-1) }{\kappa}\mu_3\\
&&-\dfrac{64( -1 + \kappa)( -116 + 77\kappa) }{9\kappa}\mu_4.
\end{eqnarray*}
Solving $\mu_i,\,i=1,2,3,4$, from the above equations, we get
\begin{equation*}
\begin{array}{ccl}
\mu_1&=&\dfrac{9(162 - 213\kappa - 205\kappa^2) }{246400}\alpha_1
+\dfrac{9( 23328 + 17604\kappa - 79904\kappa^2 - 59165\kappa^3) }{54454400}\alpha_2\\[2ex]
&&-\dfrac{9(27-18\kappa - 5\kappa^2 ) }{22400( -1 +\kappa) }\beta_0
-\dfrac{9( 162 - 4236\kappa + 657\kappa^2 + 2845\kappa^3 ) }{3203200( -1 + \kappa) }\beta_1,\\[2ex]
\mu_2&=&-\dfrac{3( 54 - 77\kappa + 82\kappa^2 ) }{30800\kappa}\alpha_1
-\dfrac{3( 3888 + 2502\kappa - 5184\kappa^2 + 11833\kappa^3 ) }{3403400\kappa}\alpha_2\\[2ex]
&&+\dfrac{3( 9 - 7\kappa + 2\kappa^2 ) }{2800\kappa(\kappa-1) }\beta_0
+\dfrac{3( 27 - 709\kappa + 965\kappa^2 - 569\kappa^3) }{200200\kappa(\kappa-1)}\beta_1,\\[2ex]
\mu_3&=&\dfrac{3( -54 + 71\kappa + 205\kappa^2 ) }{61600\kappa}\alpha_1
-\dfrac{3( 7776 + 5868\kappa - 47598\kappa^2 - 59165\kappa^3 ) }{13613600\kappa}\alpha_2\\[2ex]
&&+\dfrac{3( 9 - 6\kappa -5\kappa^2 ) }{5600\kappa( \kappa-1)}\beta_0
+\dfrac{3( 54 - 1412\kappa - 1201\kappa^2 + 2845\kappa^3 ) }{800800\kappa( \kappa-1)}\beta_1,\\[2ex]
\mu_4&=&\dfrac{3( 486 - 1017\kappa + 90\kappa^2 + 205\kappa^3 ) }{246400\kappa( -1 + \kappa)}\alpha_1
\\[2ex]
&&+\dfrac{3( 69984 - 162\kappa- 276246\kappa^2 + 44405\kappa^3 + 59165\kappa^4 ) }{54454400\kappa(\kappa-1)}\alpha_2\\[2ex]
&&-\dfrac{3(\kappa-3)( -27 + 30\kappa + 5\kappa^2 ) }{22400\kappa(\kappa-1) ^2}\beta_0
\\[2ex]
&&-\dfrac{3( 486 - 13086\kappa+ 16683\kappa^2 + 1050\kappa^3 -2845\kappa^4 ) }{3203200\kappa( \kappa-1)^2}\beta_1,
\end{array}
\end{equation*}
which yields that
\begin{equation}\label{eq28}
\alpha_0=\beta_0-\kappa\beta_1-\kappa \alpha_1-\kappa^2 \alpha_2.
\end{equation}

\begin{remark}\label{r7}
For proof's convenience  in the rest of this paper we  also take
$\alpha_i,i=1,2,$ and $\beta_i,i=0,1,$ as the new parameters,
instead of $\mu_i,\,\,i=1,2,3,4$.

 Without loss of generality suppose $\beta_1=1$ if $\beta_1\not=0$. That is to say, $\beta_1\in\{0,1\}$.
\end{remark}

Let
\begin{equation}\label{eq29}
g(s)=\dfrac{\mathcal{F}'(s)}{J_1(s)}=P_2(s)+Q_1(s)w(s),
\end{equation}
where
\begin{equation}\label{w}
w(s)=\dfrac{J_2(s)}{J_1(s)},\,\,P_2(s)=\alpha_2s^2+\alpha_1 s+\alpha_0,\,\,Q_1(s)=\beta_1s-\beta_0,
\end{equation}
and $\alpha_0$ is defiend in  \eqref{eq28}, $\beta_1\in\{0,1\}$.

\section{Asymptotic expansions for the related functions}

In this section we are going to give the asymptotic expansions of the related functions  near the endpoints of their domain of definition.

\begin{lemma}\label{l8}
$J_i(s),\,i=1,2,\cdots,6$, have the  following asymptotic expansions near $s=\kappa$:
\begin{eqnarray*}
J_1(s)&=&J_1(\kappa)\left(1-\dfrac{5(s-\kappa)}{36(\kappa-1)}+\dfrac{385(s-\kappa)^2}{5184(\kappa-1)^2}-\dfrac{85085(s-\kappa)^3}{1679616(\kappa-1)^3}
\right.\\[2ex]
&&\left.+\dfrac{37182145(s-\kappa)^4}{967458816(\kappa-1)^4}+\cdots\right),\\[2ex]
J_2(s)&=&J_1(\kappa)\left(1+\dfrac{(s-\kappa)}{36(\kappa-1)}-\dfrac{35(s-\kappa)^2}{5184(\kappa-1)^2}+\dfrac{5005(s-\kappa)^3}{1679616(\kappa-1)^3}
\right.\\[2ex]
&&\left.-\dfrac{1616615(s-\kappa)^4}{967458816(\kappa-1)^4}+\cdots\right),\\[2ex]
J_3(s)&=&J_1(\kappa)\left(1-\dfrac{17(s-\kappa)}{36(\kappa-1)}+\dfrac{(1837\kappa-36)(s-\kappa)^2}{5184\kappa(\kappa-1)^2}
\right.\\[2ex]
&&-\dfrac{(5832 - 21276\kappa+
          496709\kappa^2)(s-\kappa)^3}{1679616\kappa^2(\kappa-1)^3}\\[2ex]
&&\left.+\dfrac{5(-419904 + 1870128\kappa - 3388824\kappa^2 + 50126789\kappa^3)(s-\kappa)^4}{967458816\kappa^3(\kappa-1)^4}+\cdots\right),\\[2ex]
J_4(s)&=&J_1(\kappa)\left(1-\dfrac{(5\kappa+12)(s-\kappa)}{36\kappa(\kappa-1)}+\dfrac{(-432 + 1848\kappa + 385\kappa^2)(s-\kappa)^2}
{5184\kappa^2(\kappa-1)^2}\right.\\[2ex]
&&-\dfrac{(69984 - 286416\kappa + 612612\kappa^2 +
          85085\kappa^3)(s-\kappa)^3}{1679616\kappa^3(\kappa-1)^3}+\\[2ex]
&&\dfrac{(37182145\kappa^4+ 356948592\kappa^3- 250327584\kappa^2+ 122332032\kappa -25194240)(s-\kappa)^4}{967458816\kappa^4(\kappa-1)^4}\\[2ex]
&&\cdots),\\[2ex]
J_5(s)&=&J_1(\kappa)\left(1+\dfrac{s-\kappa}{36(\kappa-1)}+\dfrac{(36 - 71\kappa)(s-\kappa)^2}{5184\kappa(\kappa-1)^2}\right.\\[2ex]
&&+\dfrac{(5832 - 15444\kappa +
          14617\kappa^2)(s-\kappa)^3}{1679616\kappa^2(\kappa -1)^3}\\[2ex]
&&\left.-\dfrac{5(-419904 + 1504656\kappa - 1965816\kappa^2 + 1204387\kappa^3)
          (s-\kappa)^4}{967458816\kappa^3(\kappa-1)^4}+\cdots\right),\\[2ex]
J_6(s)&=&J_1(\kappa)\left(1+\dfrac{(\kappa-6)(s-\kappa)}{36\kappa(\kappa-1)}-\dfrac{(216 - 672\kappa +
          71\kappa^2)(s-\kappa)^2}{5184\kappa^2(\kappa-1)^2}\right.\\[2ex]
          &&+\dfrac{(-34992 + 121176\kappa - 185886\kappa^2 + 14617\kappa^3)(s-\kappa)^3}{1679616\kappa^3(\kappa-1)^3}\\[2ex]
         &&\left.-\dfrac{5(2519424 - 10917504\kappa + 18833472\kappa^2 - 19076208\kappa^3 +
              1204387\kappa^4)(s-\kappa)^4}{967458816\kappa^4(\kappa-1)^4}+\cdots \right).
\end{eqnarray*}
\end{lemma}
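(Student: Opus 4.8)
The plan is to compute the asymptotic expansions of the six Abelian integrals $J_i(s)$ directly from the two Picard-Fuchs systems \eqref{PF1} and \eqref{PF2}, using the fact that $s=\kappa$ is a regular singular point of these systems with the relevant solution analytic there. First I would observe that, in the $h$-variable, $s=\kappa$ corresponds to $h=-2/(3\sqrt\kappa)$, the homoclinic loop; but the particular combinations $I_{i,j}'$ that make up $J_1,\dots,J_6$ are precisely the ones that remain bounded (indeed analytic) at that value, because $I_{i,j}(h)$ itself is analytic up to and including the outer boundary except for a logarithmic term that differentiates away or cancels in these combinations. Concretely, $J_1(s)=I_{0,0}'(h(s))$ is the derivative of the area integral, which near the loop behaves like a complete elliptic integral of the first kind in the interior parametrization and is analytic at $s=\kappa$ with $J_1(\kappa)\neq 0$; this is the anchor for all the expansions.

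The core computation proceeds as follows. From \eqref{PF2}, which I repeat schematically, $6(s-1)(s-\kappa)\dot J_1 = (1-s)J_1+(\kappa-1)J_2$ and $6(s-1)(s-\kappa)\dot J_2 = (1-s)J_1+(s-1)J_2$, I would substitute the ansatz $J_1(s)=J_1(\kappa)\sum_{k\ge 0}c_k (s-\kappa)^k$ and $J_2(s)=J_1(\kappa)\sum_{k\ge 0}d_k(s-\kappa)^k$, with $c_0=1$ and $d_0$ to be determined. Evaluating the system at $s=\kappa$ forces a relation among the leading coefficients: the left sides vanish to order $(s-\kappa)$, so at order zero we get $0=(1-\kappa)c_0+(\kappa-1)d_0$ and $0=(1-\kappa)c_0+(\kappa-1)d_0$, i.e. $d_0=c_0=1$, which explains why both $J_1$ and $J_2$ have the same leading value $J_1(\kappa)$. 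Then matching coefficients of $(s-\kappa)^k$ for $k\ge 1$ gives a triangular recursion determining $c_k,d_k$ in terms of the previous ones; carrying this out to order four produces exactly the stated coefficients $-5/(36(\kappa-1))$, $385/(5184(\kappa-1)^2)$, etc., for $J_1$ and the companion ones for $J_2$. For $J_3,J_4,J_5,J_6$ I would use the original system \eqref{PF1}: each of $I_{-1,0}', I_{-1,1}', I_{1,0}', I_{0,1}'$ can be expressed through $I_{0,0}', I_{1,1}'$ (hence through $J_1,J_2$) together with their derivatives by differentiating the six relations in \eqref{PF1} once in $h$ and solving the resulting linear algebraic system for the derivatives; translating to the $s$-variable via $h=-\tfrac23\sqrt{s/\kappa}$ and expanding in $(s-\kappa)$ then yields the remaining four expansions. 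The appearance of bare $\kappa$ (not just $\kappa-1$) in the denominators of $J_3$ through $J_6$ is a direct consequence of the $1/\kappa$ and $(\kappa-1)/\kappa$ factors already visible in \eqref{PF1}.

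The main obstacle is purely the bookkeeping: getting the fourth-order coefficients right requires carrying exact rational functions of $\kappa$ through a degree-four expansion of products like $(s-1)(s-\kappa)$ against power series, and the coefficients (e.g. $37182145$, $50126789$, $356948592$) leave no room for arithmetic slips, so this is best organized as a symbolic computation with the recursion set up once and iterated. A secondary, more conceptual point that must be justified rather than merely computed is the claim that these particular solutions of the Picard-Fuchs systems are the analytic (rather than the logarithmically singular) solutions at $s=\kappa$ — equivalently that $J_1(\kappa)$ is finite and nonzero; I would dispatch this by recalling the standard description of $\Gamma_h$ near the homoclinic loop and the explicit elliptic-integral formula for the generating function from \cite{I1}, noting that the first-kind complete elliptic integral has a finite nonzero value at the corresponding modulus. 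Once the analyticity and the normalization $c_0=d_0=1$ are in hand, the recursion is forced and the lemma follows by a finite computation.
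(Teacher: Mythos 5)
Your computational skeleton (power-series ansatz in $(s-\kappa)$ inserted into the Picard--Fuchs relations, coefficients determined recursively by symbolic computation) is the same as the paper's, but the two points you yourself flag as the ones needing justification are handled incorrectly, and one needed ingredient is missing altogether. First, you have the endpoints reversed: under $h=-\tfrac23\sqrt{s/\kappa}$, the value $s=\kappa$ corresponds to $h=-2/3$, i.e.\ to the \emph{center} $(1,1)$, while $s=1$ corresponds to the homoclinic loop $h=-2/(3\sqrt{\kappa})$ (see the Remark after Theorem \ref{t2} and Lemma \ref{l10}, which is precisely the logarithmic expansion at $s=1$). Consequently your justification of analyticity at $s=\kappa$ --- that the logarithmic terms at the loop ``differentiate away or cancel'' and that $J_1$ is a complete elliptic integral of the first kind with a finite nonzero value at the loop modulus --- is false as stated: at the loop the functions $J_1,J_2$ genuinely blow up like $\ln(s-1)$, and the first-kind complete elliptic integral diverges logarithmically there. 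The correct (and much simpler) reason for analyticity, used in the paper, is that $s=\kappa$ is the center, where the $I_{i,j}(h)$ are analytic, so their derivatives $J_i$ are analytic in $s$ near $s=\kappa$.

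Second, your recursion does not determine the constants that the lemma actually asserts, namely that all six expansions start from the \emph{same} value $J_1(\kappa)$. Evaluating \eqref{PF2} at $s=\kappa$ does give $J_2(\kappa)=J_1(\kappa)$, as you note; but for $J_3,\dots,J_6$ the first-order system obtained from the differentiated relations (the analogue of \eqref{eq23}) imposes no condition at $s=\kappa$ once $J_1(\kappa)=J_2(\kappa)$ holds: the factor $6(s-1)(s-\kappa)$ kills the left-hand side and the right-hand sides vanish automatically, so the equations determine $J_3,\dots,J_6$ only up to their values at $s=\kappa$. Differentiating \eqref{PF1} eliminates the integrals $I_{i,j}$ themselves and with them exactly the information you need. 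The paper supplies it by using the undifferentiated system \eqref{PF1} together with $I_{i,j}(-2/3)=0$ (the integrals over the center vanish), which forces $I_{0,0}'(-2/3)=I_{1,1}'(-2/3)=I_{-1,0}'(-2/3)=I_{-1,1}'(-2/3)=I_{1,0}'(-2/3)=I_{0,1}'(-2/3)$, i.e.\ $J_1(\kappa)=\cdots=J_6(\kappa)$. Without this step your expansions of $J_3,\dots,J_6$ are underdetermined, so the proposal as written has a genuine gap, even though it is repairable within your framework by adding the evaluation of \eqref{PF1} at the center.
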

\begin{proof}
Differentiating both sides of system \eqref{PF1}, we  have
\begin{equation}\label{eq20}
\begin{array}{ccc}
\dfrac{3h}{2}I_{0,0}''+I_{0,1}''&=&-\dfrac{1}{2}I_{0,0}',\\[2ex]
hI_{1,0}''+\dfrac{2}{3}I_{1,1}''&=&0,\\[2ex]
\dfrac{2}{3\kappa}I_{0,0}''+hI_{0,1}''+\dfrac{2(\kappa-1)}{3\kappa}I_{1,1}''&=&0,\\[2ex]
\dfrac{3h}{8}I_{0,0}''+\dfrac{1}{2}I_{1,0}''+\dfrac{1}{4}I_{0,1}''+\dfrac{3h}{4}I_{1,1}''&=&-\dfrac{3}{8}I_{0,0}'+\dfrac{1}{4}I_{1,1}',\\[2ex]
3hI_{-1,0}''+2I_{-1,1}''&=&-2I_{-1,0}',\\[2ex]
\dfrac{\kappa-1}{\kappa}I_{1,0}''+\dfrac{1}{\kappa}I_{-1,0}''+\dfrac{3h}{2}I_{-1,1}''&=&-\dfrac{1}{2}I_{-1,1}'.\\[2ex]
\end{array}
\end{equation}
Solving  $I_{i,j}''(h)$ from \eqref{eq20}, one gets
\begin{equation}\label{eq21}
(9h^2-4)(9\kappa h^2-4)\left(\begin{array}{c}I_{0,0}''\\I_{1,1}''\\I_{-1,0}''\\I_{-1,1}''\\I_{1,0}''\\I_{0,1}''\end{array}\right)
={\mathbf M}
\left(\begin{array}{c}I_{0,0}'\\I_{1,1}'\\I_{-1,0}'\\I_{-1,1}'\end{array}\right),
\end{equation}
where
\begin{equation*}
{\mathbf M}=
\left(\begin{array}{cccc}-3h(9\kappa h^2-4)&12(\kappa-1)h&0\\-3h(9\kappa h^2-4)&3h(9\kappa h^2-4)&0\\
8(\kappa-1)&-8(\kappa-1)&-6\kappa h(9h^2-4)&2\kappa (9h^2-4)\\
-12(\kappa-1)h&12(\kappa-1)h&4(9h^2-4)&-3\kappa h(9h^2-4)\\
2(9\kappa h^2-4)&-2(9\kappa h^2-4)&0&0\\
2(9\kappa h^2-4)&-18(\kappa-1)h^2&0&0
\end{array}\right).
\end{equation*}
It follows \eqref{eq21}, \eqref{eq12} and \eqref{eq18} that
\begin{equation}\label{eq23}
6(s-1)(s-\kappa)\dfrac{d}{ds}\left(\begin{array}{c} J_1\\  J_2\\ J_3\\  J_4\\  J_5\\  J_6\end{array}\right)
=\mathbf{M}^*
\left(\begin{array}{c} J_1\\  J_2\\ J_3\\  J_4\\  J_5\\  J_6\end{array}\right),\end{equation}
with
\begin{equation*}
\mathbf{M}^*=\left(\begin{array}{cccc} 1-s&\kappa-1&0&0\\1-s &s-1&0&0\\
-\dfrac{\sqrt{\kappa}(\kappa-1)}{\sqrt{s}} &\dfrac{\sqrt{\kappa}(\kappa-1)}{\sqrt{s}}&-2(s-\kappa)&-\dfrac{\sqrt{\kappa}(s-\kappa)}{\sqrt{s}}
\\ 1-\kappa &\kappa-1&-\dfrac{2(s-\kappa)}{\sqrt{s\kappa}}&\kappa-s
\\ -\dfrac{\sqrt{\kappa}(s-1)}{\sqrt{s}} &\dfrac{\sqrt{\kappa}(s-1)}{\sqrt{s}}&0&0\\
-\dfrac{\sqrt{\kappa}(s-1)}{\sqrt{s}} &\dfrac{(\kappa-1)s}{\sqrt{s\kappa}}&0&0\end{array}\right).
\end{equation*}
Since $h=-2/3$ corresponds the center $(1,1)$ of Hamiltonian system
\eqref{eq17}, we have $I_{i,j}(-2/3)=0,\,\,i=0,\pm 1,\,\,j=0,\pm 1$.
Substituting $I_{i,j}(-2/3)=0$ into \eqref{PF1} yields
$I_{-1,1}'(-2/3)=I_{-1,0}'(-2/3)=I_{1,0}'(-2/3)=I_{0,1}'(-2/3)=I_{1,1}'(-2/3)=I_{0,0}'(-2/3)$.
Therefore $J_1(\kappa)=J_2(\kappa) =\cdots=J_6(\kappa)$.

Since $s=\kappa$ corresponds to the center of Hamiltonian system
\eqref{eq17}, $J_i(s)$ is analytic at $s=\kappa$. Taking
$J_i(s)=\sum_{j=0}^{\infty}c_{i,j}(s-\kappa)^j$ with
$c_{i,0}=J_1(\kappa)$ into \eqref{eq23}, we get the expansions.
\end{proof}

\begin{corollary}\label{c9}
 The following assertions hold:
\begin{itemize}
\item[(i)] $I(s)\equiv 0$ if and only if $\mu_1=\mu_2=\mu_3=\mu_4=0$.
\item[(ii)] There exists $\mu_i,\,\,i=1,2,3,4$, such that $I(s)$ has at most three zeros in $(1,\kappa)$.
\end{itemize}
\end{corollary}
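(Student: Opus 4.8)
\emph{Part (i).} The implication $\mu_1=\mu_2=\mu_3=\mu_4=0\Rightarrow I\equiv0$ is immediate from \eqref{Is}. For the converse I would work at the endpoint $s=\kappa$, where $\sqrt{s/\kappa}$ and all the $J_i$ are analytic by Lemma \ref{l8}, so $I$ extends analytically across $s=\kappa$; write $I(s)=\sum_{j\ge0}d_j(s-\kappa)^j$. Substituting the expansions of Lemma \ref{l8} together with $\sqrt{s/\kappa}=1+\frac{s-\kappa}{2\kappa}-\frac{(s-\kappa)^2}{8\kappa^2}+\cdots$ into \eqref{Is} and using $J_1(\kappa)=\cdots=J_6(\kappa)$, one finds $d_0\equiv0$ (forced, since $s=\kappa$ is the center) while $d_1,d_2,d_3,d_4$ are four explicit $\mathbb R$-linear forms in $(\mu_1,\mu_2,\mu_3,\mu_4)$ — this is exactly why Lemma \ref{l8} is carried to order four. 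It then remains to check that the $4\times4$ matrix $A(\kappa)$ of these forms is invertible for every $\kappa>1$: its determinant is a rational function of $\kappa$ whose numerator, once the $\kappa^{a}(\kappa-1)^{b}$ denominators inherited from Lemma \ref{l8} are cleared, is a polynomial having no root on $(1,\infty)$. Then $I\equiv0$ forces $d_1=\dots=d_4=0$, hence $\mu=0$. A conceptual alternative: $I\equiv0\Rightarrow\bar G\equiv0$ by \eqref{eq8}, hence $R\equiv0$ by \eqref{eq10}, hence $p(h)I_{0,0}'+q(h)I_{1,1}'\equiv0$; since $w=I_{1,1}'/I_{0,0}'$ is a ratio of elliptic integrals and therefore not rational, $p\equiv q\equiv0$, and \eqref{ab} becomes a homogeneous linear system with only the trivial solution.

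\emph{Part (ii).} I read (ii) as the statement that there is a choice of $\mu$ for which $I$ has exactly three (simple) zeros in $(1,\kappa)$; this gives both the bound in the corollary and the lower bound of Theorem \ref{t1}. Invertibility of $A(\kappa)$ gives more than injectivity: the leading Taylor coefficients $(d_1,d_2,d_3,d_4)$ of $I$ at $s=\kappa$ can be prescribed arbitrarily. I would exploit this by a rescaling: take $\mu=A(\kappa)^{-1}(\delta^{3}c_1,\delta^{2}c_2,\delta c_3,c_4)$ with $\delta>0$ small, so that putting $s=\kappa-\delta v$ gives $\delta^{-4}I(\kappa-\delta v)=v\,(c_4v^{3}-c_3v^{2}+c_2v-c_1)+O(\delta)$, analytically in $(v,\delta)$ and uniformly for $v$ in a bounded interval. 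Choosing $c_1,\dots,c_4$ so that $c_4v^{3}-c_3v^{2}+c_2v-c_1$ has three distinct positive roots, the implicit function theorem applied to $\delta^{-4}I$ produces, for all small $\delta$, three simple zeros of $I$ in $(1,\kappa)$ clustered near $s=\kappa$. Being simple zeros of the first non-vanishing Melnikov function (nonzero by part (i)), they yield three hyperbolic limit cycles of \eqref{eq2} for $0<\epsilon\ll1$. For the matching upper bound for this configuration one tracks the same $\mu$ through the chain $I\to\bar G\to R\to\mathcal F$ of Section 2 and applies the argument-principle estimate \eqref{eq16} together with Proposition \ref{p3} to the now low-degree polynomials $P_2,Q_1$ of \eqref{eq29}; alternatively one fixes one numerical value of $\kappa$, evaluates $I$ there, and extends by continuity in $\kappa$.

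\emph{Expected main obstacle.} In part (i) the real work is verifying $\det A(\kappa)\neq0$ for every $\kappa>1$, i.e.\ that four jets at the center already separate the four parameters, with no exceptional value of $\kappa$. In part (ii) the delicate points are (a) controlling the $O(\delta)$ remainder uniformly so that precisely the three roots of the limiting cubic persist and remain inside $(1,\kappa)$, and (b) ruling out additional zeros of $I$ away from $s=\kappa$ for the chosen $\mu$; specializing $\kappa$ to a numerical value and then arguing by continuity is the pragmatic way to settle both.
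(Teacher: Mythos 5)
Your approach is essentially the paper's. For (i) the paper does exactly what you propose: it expands $I(s)$ at $s=\kappa$ using Lemma \ref{l8}, obtains the four linear forms $\nu_i=\nu_i(\mu_1,\dots,\mu_4)$ displayed in \eqref{eq35}, and then settles your one outstanding item — nonvanishing of $\det A(\kappa)$ for all $\kappa>1$ — by an explicit computation: the determinant equals $125/(472392\,\kappa^8(\kappa-1)^4)>0$. That computation is the entire content of the paper's proof of (i), so your write-up is the right strategy with the decisive verification deferred; your ``conceptual alternative'' via $I\equiv 0\Rightarrow R\equiv 0\Rightarrow p\equiv q\equiv 0$ is plausible (non-rationality of $w$ follows from the logarithmic behaviour in Lemma \ref{l12} rather than merely from $w$ being a ratio of elliptic integrals), but it trades one unverified computation for another, namely that the $6\times 4$ homogeneous system \eqref{ab} has rank four for every $\kappa>1$. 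For (ii) the paper likewise uses the invertibility to take $\nu_1,\dots,\nu_4$ as free parameters and produces three zeros clustering at $s=\kappa$ by a sign-alternation choice with $0<|\nu_1|\ll|\nu_2|\ll|\nu_3|\ll|\nu_4|$; your $\delta$-rescaling with the limiting cubic and the implicit function theorem is a cleaner, more quantitative version of the same idea. Note, however, that the paper's proof (and its use in Theorem \ref{t1}) only establishes \emph{at least} three zeros — the ``at most'' in the statement of Corollary \ref{c9}(ii) is evidently a misprint — so your stronger reading ``exactly three'', and the accompanying sketch of a matching upper bound (tracking $\mu$ through $I\to\bar G\to R\to\mathcal F$, or fixing a numerical $\kappa$ and ``extending by continuity''), is neither needed nor proved; the continuity-in-$\kappa$ suggestion in particular would not constitute a proof.
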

\begin{proof}
It follows from \eqref{Is} and Lemma \ref{l8} that $I(s)$ has the
following asymptotic expansion at $s=\kappa$:
\begin{equation}
I(s)=\nu_1(s-\kappa)+\nu_2(s-\kappa)^2+\nu_3(s-\kappa)^3+\nu_4(s-\kappa)^4+\cdots,
\end{equation}
where $\nu_i=d^{i}I(\kappa)/ds^i,\,i=1,2,3,4$, with
\begin{equation}\label{eq35}
\begin{array}{ccl}
\nu_1&=&\dfrac{2\mu_1}{9\kappa}-\dfrac{\mu_2}{3\kappa}-\dfrac{\mu_3}{3\kappa}+\dfrac{2(\kappa-1)\mu_4}{3\kappa},\\[2ex]
\nu_2&=&\dfrac{(13\kappa-18)\mu_1}{162\kappa^2(\kappa-1)}+\dfrac{(17\kappa-18)\mu_2}{108\kappa^2(\kappa-1)}
+\dfrac{(17\kappa-12)\mu_3}{108\kappa^2(\kappa-1)}+\dfrac{(13\kappa+18)\mu_4}{54\kappa^2},\\[2ex]
\nu_3&=&-\dfrac{(1944 - 4068\kappa +
          1739\kappa^2)\mu_1}{11664\kappa^3(\kappa-1)^2}-\dfrac{(1944 - 3780\kappa +
          1801\kappa^2)\mu_2}{(7776\kappa^3(\kappa-1)^2}\\[2ex]
     &&-\dfrac{(1296 - 2712\kappa +1801\kappa^2)\mu_3}{7776\kappa^3(\kappa-1)^2}-\dfrac{(-1944+ 1260\kappa+
          1739\kappa^2)\mu_4}{3888\kappa^3(\kappa-1)^2},\\[2ex]
\nu_4&=&\dfrac{5(-104976 + 324648\kappa - 338526\kappa^2 + 101837\kappa^3)\mu_1}{1259712\kappa^4(\kappa-1)^3}\\[2ex]
&&+\dfrac{5(-104976 + 309096\kappa - 301374\kappa^2 + 96253\kappa^3)\mu_2}{839808\kappa^4(\kappa-1)^3}\\[2ex]
&&+\dfrac{5(-69984 + 216432\kappa - 225684\kappa^2 + 96253\kappa^3)\mu_3}{839808\kappa^4(\kappa-1)^3}\\[2ex]
&&+\dfrac{5(104976 - 180792\kappa + 7398\kappa^2 + 101837\kappa^3)\mu_4}{419904\kappa^4(\kappa-1)^3}.
\end{array}
\end{equation}
System \eqref{eq35} is a linear system of  equations in the variables $\mu_i,\,\,i=1,2,3,4$.
  The determinant of matrix of coefficients of \eqref{eq35} is equal to $125/(472392\kappa^8(\kappa-1)^4)>0$
for $\kappa>1$. As shown by Cramer's rule, system \eqref{eq35} has  a unique solution. Therefore $\nu_1=\nu_2=\nu_3=\nu_4=0$ if and only if
$\mu_1=\mu_2=\mu_3=\mu_4=0$, which yields that  $I(s)\equiv 0$ if and only if $\mu_i=0,\,\,i=1,2,3,4$.
This proves the assertion (i).

Since system \eqref{eq35} has  a unique solution,  we can choose
$\nu_i$ as the independent  parameters, instead of
$\mu_i,\,\,i=1,2,3,4$. Denote by $
I(s,\nu_1,\nu_2,\nu_3,\nu_4)=I(s).$ Without loss of generality
suppose  $\nu_4>0$. To get more zeros of $I(s)$, we choose $\nu_i$
and $s_i\in(1,\kappa),i=4,3,2,1,$  such that
$I(s_4,0,0,0,\nu_4)>0,\,\,I(s_3,0,0,\nu_3,\nu_4)<0,\,\,I(s_2,0,\nu_2,\nu_3,\nu_4)>0,\,\,I(s_1,\nu_1,\nu_2,\nu_3,\nu_4)<0$
and $0<|\nu_1|\ll|\nu_2|\ll|\nu_3|\ll|\nu_4|$,
$1<s_4<s_3<s_2<s_1<\kappa$. It is easy to show that $I(s)$, defined
as the above, has at least three zeros which tend to $\kappa$.
\end{proof}
\vskip 0.3cm
\begin{lemma}\label{l10}
$J_i(s),\,i=1,2,\cdots,6$, have the following asymptotic expansions near $s=1$:
\begin{eqnarray*}
J_1(s)&=&-\dfrac{\ln(s-1)}{2\sqrt{\kappa-1}}+c_{12}-\dfrac{5(s-1)\ln(s-1)}{72(\kappa-1)^{3/2}}\cdots,\\[2ex]
J_2(s)&=&\dfrac{3}{\sqrt{\kappa-1}}-\dfrac{(s-1)\ln(s-1)}{12(\kappa-1)^{3/2}}+\cdots,
\end{eqnarray*}
where $c_{12}$ is a real  constant.
\end{lemma}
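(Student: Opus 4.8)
The plan is to read the expansions off the Picard--Fuchs system \eqref{eq23} near the regular singular point $s=1$, supplying exactly one nonlocal ingredient from the geometry of the homoclinic loop. Write $h_0=-2/(3\sqrt{\kappa})$, so that $s=1$ corresponds to the loop $\Gamma_{h_0}$ through the singular point $(0,1/\sqrt{\kappa})$ of \eqref{eq17}. Since
\begin{equation*}
h_0-h(s)=\tfrac{2}{3\sqrt{\kappa}}(\sqrt{s}-1)=\tfrac{1}{3\sqrt{\kappa}}(s-1)+O((s-1)^2),
\end{equation*}
the map $s\mapsto h(s)$ sends a right neighbourhood of $s=1$ to a left neighbourhood of the loop value while multiplying $s-1$ by a nonvanishing analytic factor; hence $\ln(s-1)$ and $\ln(h_0-h)$ differ by an analytic function, and it suffices to understand the integrals $I'_{i,j}$ as $h\to h_0$.

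The one geometric input concerns $J_1$. By the co-area formula, $I'_{0,0}(h)=\frac{d}{dh}\iint_{H<h}dx\,dy=\oint_{\Gamma_h}\frac{d\ell}{|\nabla H|}=\oint_{\Gamma_h}dt$, so $J_1(s)=I'_{0,0}(h(s))$ is the period function of the orbits $\Gamma_h$. The field \eqref{eq17} is divergence free, and $(0,1/\sqrt{\kappa})$ is a hyperbolic saddle with eigenvalues $\pm 2\sqrt{\kappa-1}$; the classical estimate for the period of orbits approaching a homoclinic loop through a hyperbolic saddle then gives $T(h)=-\frac{1}{2\sqrt{\kappa-1}}\ln|h-h_0|+O(1)$, i.e. $J_1(s)=-\frac{1}{2\sqrt{\kappa-1}}\ln(s-1)+O(1)$. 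This fixes the coefficient of the leading logarithm, which is the only datum that the local analysis of \eqref{eq23} cannot produce. (Equivalently one may reduce $I_{0,0}$ to a complete elliptic integral of the first kind and use $K(k)\sim\ln(4/\sqrt{1-k^2})$ as $k\to1$.)

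Everything else comes from the ODE. The indicial endomorphism of \eqref{eq23} at $s=1$, restricted to the $(J_1,J_2)$ block, is nilpotent (a single Jordan block with eigenvalue $0$), so one writes $(J_1,J_2)=\ln(s-1)\,(u_1,u_2)+(v_1,v_2)$ with $(u_1,u_2)$ an analytic solution of the homogeneous $(J_1,J_2)$ system and $(v_1,v_2)$ an analytic solution of the inhomogeneous system with source $6(s-\kappa)(u_1,u_2)$. Substituting into \eqref{eq23} and comparing values at $s=1$ forces $u_2(1)=0$ and $v_2(1)=-6\,u_1(1)$; with $u_1(1)=-1/(2\sqrt{\kappa-1})$ this gives $J_2(1)=v_2(1)=3/\sqrt{\kappa-1}$, while $v_1(1)$ stays free --- this is the constant $c_{12}$ of the statement. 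All further coefficients ($-5/(72(\kappa-1)^{3/2})$, $-1/(12(\kappa-1)^{3/2})$, $385/(5184(\kappa-1)^2)$ analogues, etc.) fall out of the elementary termwise recursion applied to the two scalar equations $6(s-\kappa)\,u_2'=u_2-u_1$ and $6(s-\kappa)\,u_1'=-u_1+(\kappa-1)u_2/(s-1)$ (the last being regular because $u_2(1)=0$) together with their $v$-analogues. The functions $J_3,\dots,J_6$, if included in the statement, are handled identically from rows $3$--$6$ of $\mathbf M^*$: $J_3=I'_{-1,0}$ and $J_4=I'_{-1,1}$ acquire a genuine $(s-1)^{-1/2}$ term (the $(J_3,J_4)$ block of the indicial endomorphism has eigenvalues $0$ and $-1/2$), $J_6=I'_{0,1}$ carries a logarithm (the factor $y$ does not vanish at the saddle), and $J_5=I'_{1,0}$ is bounded with an $(s-1)\ln(s-1)$ correction.

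The main obstacle is the single nonlocal step --- identifying the coefficient $-1/(2\sqrt{\kappa-1})$ of $\ln(s-1)$ in $J_1$. The Picard--Fuchs recursion transports this value rigidly through the rest of the expansions, and in particular forces $J_2(1)=3/\sqrt{\kappa-1}$, but it is blind to the overall size of the logarithmic part, so one genuinely has to invoke the period-near-a-saddle-loop asymptotics (or the elliptic-integral reduction). Once that is in hand, the proof is a routine, if lengthy, coefficient computation, entirely parallel to the proof of Lemma~\ref{l8}.
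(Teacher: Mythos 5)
Your proposal is correct: the stated coefficients all check out (the indicial/residue matrix of \eqref{PF2} at $s=1$ is indeed nilpotent, the analytic branch must satisfy $u_2(1)=0$, the matching at $s=1$ gives $v_2(1)=-6u_1(1)=3/\sqrt{\kappa-1}$, and the first-order recursion yields $u_2'(1)=-1/(12(\kappa-1)^{3/2})$ and $u_1'(1)=-5/(72(\kappa-1)^{3/2})$, exactly the terms in the lemma), and the overall strategy -- one nonlocal input fixing the logarithmic coefficient of $J_1$, then propagation through the Picard--Fuchs system -- is the same as the paper's. Where you differ is in how the two external ingredients are justified. The paper invokes Roussarie's theorem \cite{R} for the general form of the expansion of $I_{i,j}(h)$ at the saddle-loop value and then quotes the known coefficient formula $\tilde d_{i,j,1}=x^iy^j/(2\sqrt{\kappa-1})$ evaluated at the saddle $(0,1/\sqrt{\kappa})$ (see \cite{Li,R} or the appendix of \cite{ZZ}); this simultaneously gives the $-\ln(s-1)/(2\sqrt{\kappa-1})$ term of $J_1$ and, since $x$ vanishes at the saddle, the absence of a leading logarithm in $J_2=I_{1,1}'$. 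You instead get the expansion form and the vanishing of $J_2$'s leading logarithm locally, from the Frobenius analysis of the regular singular point of \eqref{PF2} (nilpotent residue forces the analytic branch to have vanishing second component at $s=1$), and you fix the single remaining constant via the co-area identification $J_1=I_{0,0}'=$ period and the classical passage-time asymptotics at the hyperbolic saddle with eigenvalues $\pm 2\sqrt{\kappa-1}$ -- which is essentially how the quoted coefficient formula is derived anyway. Your route needs less citation for the $J_2$ part but requires the period interpretation and the saddle estimate; the paper's route determines all first logarithmic coefficients at once from the tabulated formula. One cosmetic point: you describe the inhomogeneous equation for $(v_1,v_2)$ with source ``$6(s-\kappa)(u_1,u_2)$'' without fixing the side/sign (the correct form is $6(s-1)(s-\kappa)v'=\mathbf M v-6(s-\kappa)u$), but the relation $v_2(1)=-6u_1(1)$ you then use is the right one, so this is only a presentational slip, not a gap.
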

\begin{proof}
Since the period annuli of the vector field \eqref{eq17}  begin at
the center $(1,1)$ and terminate at a homoclinic loop
$\Gamma_{-2/(3\sqrt{\kappa})}=\{(x,y)|H(x,y)=-2/(3\sqrt{\kappa})\}$,
it follows from \cite{R} that $I_{i,j}(h),\,\,i\geq 0$, have the
asymptotic expansions of the form
\begin{equation*}
I_{i,j}(h)=\sum_{k=0}^{\infty}d_{i,j,k}\left(-\dfrac{2}{3\sqrt{\kappa}}-h\right)^k
+\ln\left(-\dfrac{2}{3\sqrt{\kappa}}-h\right)\sum_{k=1}^{\infty}{\tilde d}_{i,j,k}\left(-\dfrac{2}{3\sqrt{\kappa}}-h\right)^k,
\end{equation*}
as $h\rightarrow -2/(3\sqrt{\kappa})$, which implies that
$J_i(s),\,\,i=1,2$, have the asymptotic expansions of the form
\begin{equation}\label{eq36}
J_i(s)=c_{i,1}\ln(s-1)+c_{i,2}+c_{i,3}(s-1)\ln(s-1)+\cdots
\end{equation}
as $s\rightarrow 1$. On the other hand it is well known (see for instance \cite{Li,R} or the appendix of \cite{ZZ}) that
\begin{equation*}
\tilde{d}_{i,j,1}=\dfrac{x^iy^j}{2\sqrt{\kappa-1}}\Bigg|_{(x,y)=(0,1/\sqrt{\kappa})}=\left\{\begin{array}{cl}
\dfrac{1}{2\sqrt{\kappa^j(\kappa-1)}},&\mathrm{if}\,\,i=0,\\[2ex]
0,&\mathrm{if}\,\,i=1.
\end{array}
\right.
\end{equation*}
A simple calculation shows that $c_{i,1}=-{\title d}_{i,j,1}$.
Taking \eqref{eq36} with $c_{i,1}=-{\title d}_{i,j,1}$ into
\eqref{PF2}, we obtain the asymptotic expansions near $s=1$ for
$J_i(s),\,\,i=1,2$.
\end{proof}
\vskip 0.3cm By Lemma \ref{l8} and Lemma \ref{l10}, a straight
calculation shows that the following two lemmas hold.

\begin{lemma}\label{l11}
The following expressions hold:
\begin{equation*}w(\kappa)=1,\,\,w'(\kappa)=\dfrac{1}{6(\kappa-1)},\,\,w''(\kappa)=-\dfrac{25}{216(\kappa-1)^2},\,\,w'''(\kappa)=\dfrac{775}{3888(\kappa-1)^3},\\[2ex]
\end{equation*}
\begin{equation*}
g(\kappa)=0,\,\,{\mathcal F}(\kappa)={\mathcal
F}'(\kappa)=0,\,\,{\mathcal F}''(\kappa)=J_1(\kappa)g'(\kappa).
\end{equation*}
\end{lemma}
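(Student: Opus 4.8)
The plan is to read off all six assertions from the expansions of Lemma~\ref{l8} together with the two algebraic identities \eqref{eq28} and \eqref{eq29}, exactly as the sentence preceding the statement promises. \emph{The values of $w$.} Since $w(s)=J_2(s)/J_1(s)$ and $J_1(\kappa)=J_2(\kappa)$ (this equality is established inside the proof of Lemma~\ref{l8}), one has $w(\kappa)=1$. Writing $u=s-\kappa$ and the Lemma~\ref{l8} expansions as $J_1=J_1(\kappa)(1+p_1u+p_2u^2+p_3u^3+\cdots)$ and $J_2=J_1(\kappa)(1+q_1u+q_2u^2+q_3u^3+\cdots)$, I would simply divide the two power series, $w(s)=1+(q_1-p_1)u+\bigl(q_2-p_1q_1+p_1^2-p_2\bigr)u^2+\cdots$, up to the cubic term, and then multiply the coefficients of $u,u^2,u^3$ by $1!,2!,3!$. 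A short arithmetic check (for instance $q_1-p_1=\tfrac{1}{36(\kappa-1)}+\tfrac{5}{36(\kappa-1)}=\tfrac{1}{6(\kappa-1)}$) reproduces the four displayed values; no input beyond Lemma~\ref{l8} is needed.

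\emph{The identity $g(\kappa)=0$.} By \eqref{eq29}--\eqref{w}, $g(s)=P_2(s)+Q_1(s)w(s)$ with $P_2(s)=\alpha_2s^2+\alpha_1s+\alpha_0$ and $Q_1(s)=\beta_1s-\beta_0$; using $w(\kappa)=1$,
\[
g(\kappa)=\alpha_2\kappa^2+\alpha_1\kappa+\alpha_0+\beta_1\kappa-\beta_0,
\]
which vanishes because of the relation $\alpha_0=\beta_0-\kappa\beta_1-\kappa\alpha_1-\kappa^2\alpha_2$ recorded in \eqref{eq28}. This is the only place where \eqref{eq28} enters.

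\emph{The assertions on $\mathcal F$.} From \eqref{eq29} we have $\mathcal F'(s)=g(s)J_1(s)$. Hence $\mathcal F'(\kappa)=g(\kappa)J_1(\kappa)=0$ by the previous step, and, differentiating once more, $\mathcal F''(s)=g'(s)J_1(s)+g(s)J_1'(s)$, so $\mathcal F''(\kappa)=g'(\kappa)J_1(\kappa)$, again because $g(\kappa)=0$. There remains $\mathcal F(\kappa)=0$. For this I would argue via \eqref{eq25}: the left-hand side $L_2G=s(1-s)G''-\tfrac12G'-\tfrac{5}{36}G$ is analytic at $s=\kappa$, since $G(s)=(\mu_1h(s)^2+\mu_3)J_1(s)+\mu_2J_2(s)+\mu_4[-4h(s)J_3(s)+(3\kappa h(s)^2-4)J_4(s)]$ is a polynomial combination of the $J_i$ and of $h(s)$, all analytic at $s=\kappa$ by Lemma~\ref{l8}. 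Therefore the right-hand side $\kappa\mathcal F(s)/\bigl(1152(s-\kappa)^2(s-1)\bigr)$ must be analytic at $s=\kappa$, which forces $\mathcal F$ to vanish to order at least two there; in particular $\mathcal F(\kappa)=0$ (and this re-derives $\mathcal F'(\kappa)=0$ as well). Alternatively one could check $P_3(\kappa)+Q_2(\kappa)=p(-2/3)+q(-2/3)=0$ straight from \eqref{pq}--\eqref{ab} and \eqref{eq28}, but that computation is considerably longer.

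The only point requiring care is this last one: the logic must be run in the right order, first noting that $L_2G$ is analytic at $s=\kappa$ and only then concluding the double zero of $\mathcal F$ — one cannot start from \eqref{eq25} and assume the quotient is regular. The denominator factor $(s-1)$ in \eqref{eq25} is harmless, since we work in a neighbourhood of $s=\kappa\,(>1)$. Everything else is the straight calculation announced before the statement.
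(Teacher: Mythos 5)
Your proposal is correct, and for most of the lemma it is exactly the paper's "straight calculation from Lemma~\ref{l8}": dividing the series of $J_2$ by that of $J_1$ at $s=\kappa$ gives the four values of $w$ and its derivatives (your sample check $q_1-p_1=\tfrac1{6(\kappa-1)}$ is right, and the order-$3$ data in Lemma~\ref{l8} suffices), $g(\kappa)=0$ follows from $w(\kappa)=1$ together with \eqref{eq28}, and $\mathcal F'(\kappa)=0$, $\mathcal F''(\kappa)=J_1(\kappa)g'(\kappa)$ follow from $\mathcal F'(s)=g(s)J_1(s)$, i.e.\ \eqref{eq29}. The only point where you deviate is $\mathcal F(\kappa)=0$: instead of evaluating $P_3(\kappa)+Q_2(\kappa)=p(-2/3)+q(-2/3)=0$ directly from \eqref{pq}--\eqref{ab}, you invoke the analyticity of $G$ at $s=\kappa$ in \eqref{eq25}, which forces the right-hand side, hence $\mathcal F$, to have a double zero there. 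This is legitimate (no circularity, since \eqref{eq25} is derived independently of the lemma) and is in fact the very observation the paper records in the sentence immediately following the lemma, where it is used as a consistency check; the paper's own proof of the lemma is the direct evaluation, which costs a longer but elementary computation, while your route gets $\mathcal F(\kappa)=\mathcal F'(\kappa)=0$ for free from the structure of \eqref{eq25}. Either way the statement is fully established.
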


Since $G(s)$ is analytic at $s=\kappa$, it follows from \eqref{eq25} that $\mathcal {F}(\kappa)=\mathcal {F}'(\kappa)=0$. This is verified by Lemma
\ref{l11}.
\begin{lemma}\label{l12}
The the following expansions holds  as $s\rightarrow 1$:
\begin{eqnarray*}
w(s)&=&-\dfrac{6}{\ln(s-1)}+\cdots,\\[2ex]
g(s)&=&\left\{\begin{array}{ll}P_2(1)-\dfrac{6Q_1(1)}{\ln(s-1)}+\cdots,&\mathrm{if}\,\,Q_1(1)\not=0,\\[2ex]
P_2(s),&\mathrm{if}\,\,\beta_0=\beta_1=0,\\[2ex]
P_2(1)-\dfrac{6(s-1)}{\ln(s-1)}+\cdots,&\mathrm{if}\,\,\beta_0=\beta_1=1,
\end{array}\right.
\end{eqnarray*}
where  $\beta_1\in\{0,1\}$.
\end{lemma}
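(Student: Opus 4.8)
The plan is to read both expansions off Lemma \ref{l10} by direct substitution into the definitions $w(s)=J_2(s)/J_1(s)$ and $g(s)=P_2(s)+Q_1(s)w(s)$ from \eqref{eq29} and \eqref{w}, using that $P_2$ and $Q_1$ are polynomials and hence analytic at $s=1$. Write $L=\ln(s-1)$, which tends to $-\infty$ as $s\rightarrow1^{+}$.

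First I would handle $w$. By Lemma \ref{l10},
\[
J_1(s)=-\frac{L}{2\sqrt{\kappa-1}}\Bigl(1-\frac{2c_{12}\sqrt{\kappa-1}}{L}+\frac{5(s-1)}{36(\kappa-1)}+\cdots\Bigr),\qquad
J_2(s)=\frac{3}{\sqrt{\kappa-1}}\Bigl(1-\frac{(s-1)L}{36(\kappa-1)}+\cdots\Bigr),
\]
as $s\rightarrow1^{+}$. Since $L\rightarrow-\infty$ we have $J_1(s)\rightarrow+\infty$, so $J_1$ is nonzero on a right neighbourhood of $s=1$ and $w=J_2/J_1$ is well defined there; expanding the reciprocal of the bracketed factor of $J_1$ as a geometric series in the infinitesimals $1/L$ and $s-1$ gives
\[
w(s)=\frac{3/\sqrt{\kappa-1}}{-L/(2\sqrt{\kappa-1})}\bigl(1+o(1)\bigr)=-\frac{6}{L}+\cdots=-\frac{6}{\ln(s-1)}+\cdots,
\]
which is the first assertion.

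Next I would substitute this into $g=P_2+Q_1w$. Since $P_2(s)=P_2(1)+O(s-1)$ and $Q_1(s)=Q_1(1)+O(s-1)$ near $s=1$, and since $Q_1(s)=\beta_1 s-\beta_0$ with $\beta_1\in\{0,1\}$ by Remark \ref{r7}, the value $Q_1(1)=\beta_1-\beta_0$ splits the discussion into the three exhaustive cases of the statement. If $Q_1(1)\neq0$, then $Q_1(s)w(s)=\bigl(Q_1(1)+O(s-1)\bigr)\bigl(-6/\ln(s-1)+\cdots\bigr)=-6Q_1(1)/\ln(s-1)+\cdots$, whence $g(s)=P_2(1)-6Q_1(1)/\ln(s-1)+\cdots$. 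If $\beta_0=\beta_1=0$, then $Q_1\equiv0$ and $g\equiv P_2$ is a polynomial. If $\beta_0=\beta_1=1$, then $Q_1(s)=s-1$, so $Q_1(s)w(s)=(s-1)\bigl(-6/\ln(s-1)+\cdots\bigr)=-6(s-1)/\ln(s-1)+\cdots$ and $g(s)=P_2(1)-6(s-1)/\ln(s-1)+\cdots$, the ellipsis now collecting the analytic part $P_2'(1)(s-1)+\cdots$ together with the higher-order logarithmic terms.

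The whole computation is routine once the expansions of Lemma \ref{l10} are in hand, as the sentence preceding the statement already says. The only point that needs a little care is the bookkeeping of the several competing infinitesimals $1/\ln(s-1)$, $s-1$, $(s-1)/\ln(s-1)$ and $(s-1)\ln(s-1)$ occurring in the geometric-series expansion of $J_2/J_1$, so as to be sure that the displayed term is indeed the leading logarithmic correction and that the remainder is of smaller order in each case; I do not foresee any genuine obstacle.
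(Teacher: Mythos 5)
Your argument is correct and is essentially the paper's own: the paper proves Lemma \ref{l12} precisely by the ``straight calculation'' from the expansions of Lemma \ref{l10}, namely dividing the series of $J_2$ by that of $J_1$ to get $w(s)=-6/\ln(s-1)+\cdots$ and then substituting into $g=P_2+Q_1w$ with the case split according to $Q_1(1)=\beta_1-\beta_0$. Your bookkeeping of the competing infinitesimals (including the remark that in the case $\beta_0=\beta_1=1$ the analytic term $P_2'(1)(s-1)$ is absorbed into the ellipsis alongside the higher-order logarithmic terms) matches the intended reading of the statement, so nothing further is needed.
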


\section{Proof of Theorem \ref{t1}}

First of all we note that the following proposition holds:

\vskip0.3cm
\begin{proposition}\label{p13}
Denote by  $\#{\mathcal F}(s)$  the number of zeros of ${\mathcal
F}(s)$. Then we have
\begin{equation}\label{eq35.2}
\#\mathcal F(s)\leq \#\mathcal{F}'(s)=\# g(s),\,\,\# I(s)\leq
\#G(s)\leq \#\mathcal{F}(s)+2.
\end{equation}
\end{proposition}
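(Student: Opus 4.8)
The plan is to read off all four inequalities from the structural relations already established in the excerpt. First consider the chain $\#\mathcal F(s)\le \#\mathcal F'(s)=\#g(s)$. By Lemma \ref{l11} we have $\mathcal F(\kappa)=\mathcal F'(\kappa)=0$, so $s=\kappa$ is a zero of $\mathcal F$ of multiplicity at least two; more precisely, between any two consecutive zeros of $\mathcal F$ in $(1,\kappa)$ Rolle's theorem produces a zero of $\mathcal F'$, and the double zero at $\kappa$ lets us convert the ``$-1$'' one normally loses in Rolle into an equality. Concretely, if $\mathcal F$ has $m$ zeros in $(1,\kappa)$ counted with multiplicity, then $\mathcal F'$ has at least $m-1$ zeros strictly inside the convex hull of those zeros, plus the zero forced at $s=\kappa$ by $\mathcal F'(\kappa)=0$ (which is distinct from the interior Rolle zeros when $\kappa$ is not itself a zero of $\mathcal F$, and is already accounted for with higher multiplicity when it is); either way one gets $\#\mathcal F(s)\le \#\mathcal F'(s)$. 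The equality $\#\mathcal F'(s)=\#g(s)$ is immediate from \eqref{eq29}: $g(s)=\mathcal F'(s)/J_1(s)$ and, by Remark \ref{r5}, $J_1(s)$ is a complete elliptic integral of the first kind that never vanishes on $(1,\kappa)$, so dividing by $J_1$ does not change the zero set or multiplicities.

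Next consider $\#I(s)\le \#G(s)$. From the definitions \eqref{eq8} and \eqref{eq12} we have $I(h)=h\int_{-2/3}^h\xi^{-2}\bar G(\xi)\,d\xi$, equivalently $(I(h)/h)'=\bar G(h)/h^2$; translating to the variable $s$ via \eqref{eq12}, $I(s)$ is (up to the nonvanishing factor coming from $h=h(s)$ and the chain rule, and the substitution $h^{-1}\cdot$) an antiderivative-type quantity whose derivative is a nonzero multiple of $G(s)$ on $(1,\kappa)$. Hence between consecutive zeros of $I(s)$ there is a zero of $G(s)$, giving $\#I(s)\le \#G(s)+?$; the boundary behaviour at $s=\kappa$, where $I(\kappa)=0$ by Corollary \ref{c9} and the expansion there, again absorbs the Rolle deficit, yielding $\#I(s)\le\#G(s)$ exactly. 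Finally $\#G(s)\le \#\mathcal F(s)+2$ is precisely Proposition \ref{p3}(i) applied to the second-order differential equation \eqref{eq25}: the operator $L_2$ has Chebyshev solution space by Proposition \ref{p3}(ii), the right-hand side of \eqref{eq25} is $\kappa\mathcal F(s)/(1152(s-\kappa)^2(s-1))$ whose number of zeros on $(1,\kappa)$ equals $\#\mathcal F(s)$ (the denominator contributes no zeros in the open interval), so every solution $G(s)$ of \eqref{eq25} has at most $\#\mathcal F(s)+2$ zeros.

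The main obstacle is the bookkeeping at the endpoint $s=\kappa$: one must be careful that the ``$+2$'' from Proposition \ref{p3}(i) and the vanishing of $\mathcal F,\mathcal F',I,G$ at $\kappa$ are not double-counted, i.e.\ that the equalities $\#\mathcal F\le\#\mathcal F'$ and $\#I\le\#G$ genuinely hold without an extra $+1$. This is handled by working with zeros counted with multiplicity and invoking the standard refinement of Rolle's theorem for functions with a prescribed zero at an endpoint of the interval; all the needed vanishing data is supplied by Lemma \ref{l11} and Corollary \ref{c9}. The remaining steps are purely formal consequences of \eqref{eq25}, \eqref{eq29}, and Proposition \ref{p3}.
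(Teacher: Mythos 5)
Your proposal is correct and follows essentially the same route as the paper: the first inequality comes from the endpoint vanishing $\mathcal F(\kappa)=\mathcal F'(\kappa)=0$ (Lemma \ref{l11}) via Rolle, the equality from dividing by the nonvanishing elliptic integral $J_1$, the bound $\#I\leq\#G$ from the representation $I(h)=h\int_{-2/3}^{h}\xi^{-2}\bar G(\xi)\,d\xi$ already noted in Section 2, and $\#G\leq\#\mathcal F+2$ from Proposition \ref{p3} applied to \eqref{eq25}. The only cosmetic slip is attributing the extra Rolle zero to $\mathcal F'(\kappa)=0$ rather than to the interval $(s_k,\kappa)$ produced by $\mathcal F(\kappa)=0$, which does not affect the count.
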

\begin{proof}
It follows from Lemma \ref{l11} that ${\mathcal F}(\kappa)={\mathcal
F}'(\kappa)=0$ This yields the first inequality of \eqref{eq35.2}.
The second inequality is obtained by  Proposition \ref{p3}.
\end{proof}

 Noting that the cyclicity of period annulus is determined by $\#I(s)$, we will prove Theorem \ref{t1} by estimating the number
of zeros of $g(s)$.

Since $g(\kappa)=0$, we get that  $g(s)$ has at most three zeros in
$(1,\kappa)$ by argument principle, see Remark \ref{r5}. This
implies that $\#I(s)\leq \#G(s)\leq 5$. However, to get more
information about the number of zeros of $g(s)$ (hence $I(s)$), we
prefer to prove Theorem \ref{t1} by the following theorem, see the
comments in the next section, and the note after the statement of
this theorem.

\begin{theorem}\label{t14}
Let $s\in(1,\kappa)$ and $\beta_1\in\{0,1\}$.
\begin{itemize}
\item[(a)] If $\beta_1=1,\,\,\beta_0\in(-\infty,(23\kappa-54)/31]\cup[1,+\infty)$, then $g(s)$ has at most two zeros
 in $(1,\kappa)$.
\item[(b)] If $\beta_1=1,\,\,\beta_0\in((23\kappa-54)/31,1)$, then $g(s)$ has at most three zeros.
\item[(c)] If $\beta_1=0,\,\,\beta_0\not=0$, then  $g(s)$ has at most two zeros in the same interval.
\item[(d)] If $\beta_1=0,\,\,\beta_0=0$, then $g(s)$ has at most one zeros.
\end{itemize}
\end{theorem}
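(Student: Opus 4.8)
## Proof proposal for Theorem \ref{t14}

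The plan is to analyze $g(s) = P_2(s) + Q_1(s)w(s)$ on $(1,\kappa)$ by studying its derivative and using the Picard–Fuchs equation \eqref{PF2} to replace $w'(s)$ by a rational expression in $w(s)$. From \eqref{PF2} one computes $\dot w = (J_2/J_1)\dot{} = (\dot J_2 J_1 - J_2\dot J_1)/J_1^2$, which after substitution yields a Riccati-type identity
\[
6(s-1)(s-\kappa)\,w'(s) = (s-1) - (1-s)w(s) + (1-s)w(s)^2 \cdot(\text{adjust}),
\]
i.e. $w'$ is a quadratic polynomial in $w$ with rational-in-$s$ coefficients. Differentiating $g$ and clearing denominators, $g'(s)$ has the form $\bigl(A(s) + B(s)w(s)\bigr)/\bigl(6(s-\kappa)(s-1)\bigr)$ with $A,B$ polynomials whose degrees I would track carefully (using that $\deg P_2\le 2$, $\deg Q_1\le 1$); the numerator is again an element of a space spanned by $\{1,s,s^2,\dots\}$ and $\{1,s,\dots\}\cdot w(s)$, so Remark \ref{r5}'s argument-principle bound applies to it as well. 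The strategy is then a derivation-reduction ladder: bound $\#g(s)$ in terms of $\#g'(s)$ (Rolle, plus the boundary behaviour), bound $\#g'(s)$ by the argument principle à la Remark \ref{r5}, and feed in the endpoint asymptotics from Lemmas \ref{l11} and \ref{l12}.

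The key structural inputs are the boundary data. At $s=\kappa$: Lemma \ref{l11} gives $g(\kappa)=0$ and the explicit Taylor coefficients of $w$ at $\kappa$, so I can compute $g'(\kappa)$, $g''(\kappa)$ as explicit linear expressions in $\alpha_2,\alpha_1,\beta_0$ (with $\beta_1$ fixed, and $\alpha_0$ eliminated via \eqref{eq28}); in particular the vanishing order of $g$ at $\kappa$ is detectable. At $s=1$: Lemma \ref{l12} tells me $g(1^+)=P_2(1)$ when $Q_1(1)\ne0$ (approached with an infinite-slope $1/\ln(s-1)$ correction), $g\equiv P_2(s)$ on the whole interval when $\beta_0=\beta_1=0$ (case (d), where $P_2$ has degree $\le 2$ but in fact — after using \eqref{eq28} with $\beta_0=\beta_1=0$ — one checks $P_2$ reduces to a function with at most one zero on $(1,\kappa)$), and the $(s-1)/\ln(s-1)$ correction in the $\beta_0=\beta_1=1$ subcase. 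The dichotomy in the statement — the threshold $\beta_0 = (23\kappa-54)/31$ and the endpoints $\beta_0 = 1$ — should emerge precisely as the values of $\beta_0$ at which $Q_1(s) = \beta_1 s - \beta_0$ has its root $s=\beta_0$ enter or leave the interval $(1,\kappa)$, or at which some discriminant-type quantity in the $g'$-numerator changes sign. Concretely: $Q_1$ vanishes inside $(1,\kappa)$ iff $\beta_0\in(1,\kappa)$, and the special value $(23\kappa-54)/31$ is what one gets by forcing a double zero of $g$ (i.e. a common zero of $g$ and $g'$) to occur, using $w(\kappa)=1$, $w'(\kappa)=1/(6(\kappa-1))$. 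So the proof splits along whether $Q_1$ changes sign on the interval and, when it does, whether the sign pattern of $g$ at the relevant critical points is compatible with three zeros.

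The main obstacle I anticipate is the bookkeeping in case (b), where three zeros are genuinely possible and the bound must be shown sharp-in-principle but not exceeded: here the argument-principle count on $g'(s)$ over $\mathbb{C}\setminus(-\infty,1]$ must be done delicately, because $\#g'(s)$ naively could be $\deg(\text{poly part}) + \#(Q_1\text{-part}) + 1$, which might be too large, and one needs to exploit the extra vanishing of the numerator of $g'$ at $s=\kappa$ (analogous to how $\mathcal F$ and $\mathcal F'$ both vanish there in Proposition \ref{p13}) to save one or two. A secondary subtlety is handling the logarithmic endpoint at $s=1$ rigorously when invoking Rolle's theorem — one must check that $g$ extends continuously (with the stated one-sided limit) so that a zero near $1$ is correctly accounted for, and that $g'$ does not have a spurious zero hidden in the $1/\ln(s-1)$ tail. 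I would organize the final write-up as: (1) derive the Riccati identity for $w$ and the rational form of $g'$; (2) record $g(\kappa)=0$ and compute the first few Taylor coefficients of $g$ at $\kappa$ from Lemma \ref{l11}; (3) apply Remark \ref{r5} to bound $\#g'$ and hence (via Rolle and the endpoint analysis) $\#g$ in each of the four regimes, with the threshold $\beta_0=(23\kappa-54)/31$ appearing as the sign-change locus of the leading coefficient in the $g'$-numerator; (4) treat the degenerate cases (c), (d) directly from Lemma \ref{l12}.
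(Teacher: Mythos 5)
There is a genuine gap, and it is structural rather than a matter of bookkeeping. Your plan rests on the claim that, after substituting the Riccati relation $6(s-1)(s-\kappa)w'=(1-s)+2(s-1)w-(\kappa-1)w^2$ obtained from \eqref{PF2}, the derivative $g'(s)$ has a numerator of the form $A(s)+B(s)w(s)$, to which the argument-principle bound of Remark \ref{r5} would apply. That is false whenever $Q_1\not\equiv 0$: since $w'$ is \emph{quadratic} in $w$, the term $Q_1(s)w'(s)$ injects a $w^2$ contribution (equivalently, the numerator of $g'$ is a quadratic form in $J_1,J_2$, not a linear combination $P_nJ_1+Q_{n-1}J_2$), so Remark \ref{r5} does not apply to $g'$. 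If you instead keep linearity by differentiating $\mathcal{F}'=P_2J_1+Q_1J_2$ via \eqref{PF2}, you must multiply through by $6(s-1)(s-\kappa)$, which raises the polynomial degrees back to $(3,2)$; the resulting bound ($\leq 6$ zeros, or at best the refined count of Remark \ref{r5}) is worse than what you started with, so the ``derivation-reduction ladder'' does not reduce anything. More importantly, even a correct argument-principle count applied directly to $g=P_2+Q_1w$ only yields: at most $2$ zeros in $(1,\kappa)$ when $\beta_1=1,\ \beta_0\geq 1$ (and in case (c)), and at most $3$ when $\beta_0<1$. It cannot produce the bound of \emph{two} zeros in case (a) for $\beta_0\leq(23\kappa-54)/31$, which is the genuinely new content of the theorem; the paper states explicitly, right after Theorem \ref{t14}, that this regime seems out of reach of the argument principle.

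The actual proof goes through higher-order convexity of $w$, not through Rolle plus Remark \ref{r5}. One first proves $w'>0$, $w''<0$, $0<w<1$ (Lemma \ref{l15}) and $w'''>0$ (Lemma \ref{l16}) on $(1,\kappa)$, using the Riccati equation together with positivity of explicit polynomials on the rectangle $\mathcal{D}$ established by resultant and Sturm computations. Then, because $P_2$ is quadratic, for $\beta_1=1$ one has $g'''(s)=3w''(s)+(s-\beta_0)w'''(s)=3w'''(s)\Theta(s)$ with $\Theta=w''/w'''+(s-\beta_0)/3$, and a further resultant argument shows $\Theta'<0$, so $g'''$ has at most one zero; the trichotomy in $\beta_0$ comes from the sign of $g'''(\kappa)=-25(23\kappa+31\beta_0-54)/(3888(\kappa-1)^3)$ together with the behaviour of $g'''$ as $s\to 1^+$ (Lemmas \ref{l11} and \ref{l12}) — not, as you conjecture, from the root of $Q_1$ entering $(1,\kappa)$ or from forcing a double zero of $g$. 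Bounding the number of inflection points of $g$ (Corollary \ref{c18}) and using $g(\kappa)=0$ then gives the zero counts in (a) and (b); case (c) uses $g'''=-\beta_0w'''\neq 0$, and only case (d), where $g=P_2=(s-\kappa)(\alpha_1+\kappa\alpha_2+\alpha_2 s)$ by \eqref{eq28}, is handled the elementary way you describe. To repair your proposal you would need to replace the argument-principle step on $g'$ by this kind of sign analysis of $g''$ and $g'''$ based on Lemmas \ref{l15} and \ref{l16}.
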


If $\beta_1=1,\,\,\beta_0\in[1,+\infty)$ (resp. $\beta_0\in(-\infty,1)$), then it can be proved that $g(s)$ has at most two (resp. three)
 zeros in $(1,\kappa)$ by argument principle, see
the proof of Proposition \ref{p4}\cite{GI}, or Remark \ref{r5}. However it seems that we can not prove by argument principle
 that $g(s)$ has at most two zeros in $(1, \kappa)$ if $\beta_1=1,\,\,\beta_0\in(-\infty,(23\kappa-54)/31]$.

\vskip 0.3cm

Firstly we study the geometric properties of $w(s)=J_2(s)/J_1(s)$.

\begin{lemma}\label{l15}
The function $w(s)$ is   monotonically  increasing and concave in
the interval $(1,\kappa)$, i.e., $w'(s)>0,\,\,w''(s)<0$ and
$0<w(s)<1$.
\end{lemma}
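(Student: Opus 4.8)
The plan is to extract everything we need about $w(s)=J_2(s)/J_1(s)$ from the Picard--Fuchs system \eqref{PF2}, rather than from the series expansions, since a first-order Riccati-type equation for $w$ follows directly from \eqref{PF2}. First I would divide the two scalar equations in \eqref{PF2} to obtain
\begin{equation*}
6(s-1)(s-\kappa)\,w'(s)=(s-1)w(s)^2-(1-s)+\bigl((1-s)-(\kappa-1)\bigr)w(s),
\end{equation*}
i.e. after simplification a relation of the form $6(s-1)(s-\kappa)w' = (s-1)(w^2+1) - \kappa w + (s-1)w$ (I would carry out this one-line algebra carefully from the matrix entries). On the interval $(1,\kappa)$ the factor $6(s-1)(s-\kappa)$ is strictly negative, so the sign of $w'$ is governed by the sign of the right-hand quadratic-in-$w$ expression. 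From Lemma \ref{l11} we already know the boundary data $w(\kappa)=1$, $w'(\kappa)=1/(6(\kappa-1))>0$, and from Lemma \ref{l12} we know $w(s)\to 0^+$ as $s\to 1^+$ (the expansion $w=-6/\ln(s-1)+\cdots$ is positive and tends to $0$).

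Next I would prove the two-sided bound $0<w(s)<1$ on $(1,\kappa)$ by a continuity/barrier argument. Since $w(s)\to 0^+$ at $s=1$ and $w(\kappa)=1$, it suffices to show $w$ can never reach the values $0$ or $1$ in the open interval: evaluate the Riccati right-hand side at $w=0$ and at $w=1$ and check it has a definite sign there, so that the vector field for $w$ points inward on the strip $\{0<w<1\}$; combined with $w'(\kappa)>0$ this pins $w$ inside $(0,1)$ throughout. Then $w'>0$ follows: with $0<w<1$ and $s-1>0$, the right-hand expression of the Riccati equation is negative (this is the key sign computation — one checks the quadratic in $w$ stays negative for $w\in(0,1)$, $s\in(1,\kappa)$), and dividing by the negative factor $6(s-1)(s-\kappa)$ gives $w'>0$.

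For concavity I would differentiate the Riccati relation once more to get a linear first-order ODE for $w''$ with $w$, $w'$ as (now known-sign) coefficients, of the schematic form $6(s-1)(s-\kappa)w'' = \bigl(\text{linear in }w,w',s\bigr)$; using $0<w<1$ and $w'>0$ one reads off that the bracket has a fixed sign, and again dividing by the negative leading factor yields $w''<0$ on $(1,\kappa)$. As a consistency check, the computed values $w''(\kappa)=-25/(216(\kappa-1)^2)<0$ and $w'''(\kappa)=775/(3888(\kappa-1)^3)>0$ from Lemma \ref{l11} match this picture at the right endpoint. The main obstacle I anticipate is purely computational bookkeeping: deriving the correct Riccati equation and its derivative from \eqref{PF2} without sign errors, and verifying that the relevant quadratic/linear expressions in $w$ keep a constant sign on the whole rectangle $w\in(0,1)$, $s\in(1,\kappa)$ — there is no conceptual difficulty, but the sign analysis must be done with care, possibly splitting into the regions near the two endpoints where the expansions of Lemmas \ref{l8}, \ref{l10}, \ref{l12} give the needed boundary behavior.
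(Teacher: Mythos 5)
Your first half is essentially the paper's argument: from \eqref{PF2} one gets the Riccati relation $6(s-1)(s-\kappa)w'=U(s,w)$ with $U(s,w)=(1-s)+2(s-1)w-(\kappa-1)w^2$, and the sign of $U$ settles $w'>0$. Two small corrections there: the clean way is to observe that $U$, as a quadratic in $w$, has negative leading coefficient and discriminant $4(s-1)(s-\kappa)<0$, hence $U<0$ for \emph{all} $w$; this gives $w'>0$ at once, and then $0<w<1$ follows from monotonicity together with the endpoint data of Lemmas \ref{l11} and \ref{l12} — you do not need (and cannot use) a barrier at $w=1$, since at $w=1$ one has $U=s-\kappa<0$, i.e. $w'=1/(6(s-1))>0$, so the field actually points \emph{out} of the strip there; your inward-pointing claim fails as stated, though it is easily repaired as above.

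The genuine gap is in the concavity step. Differentiating the Riccati relation and eliminating $w'$ gives (as in \eqref{eq39}) $w''=V_1(s,w)V_2(s,w)/\bigl(18(s-1)^2(s-\kappa)^2\bigr)$ with $V_1=(\kappa-1)w-(s-1)$ and $V_2=(\kappa-1)w^2+(4s-3\kappa-1)w-2(s-1)$, and no "read off the sign from $0<w<1$, $w'>0$" argument can work: $V_1$ changes sign on the rectangle $\mathcal{D}=\{1\le s\le\kappa,\ 0\le w\le 1\}$ (it is negative at $(\kappa,0)$ and positive at $(1,1)$), so the sign of $w''$ depends on whether the graph of $w$ lies above or below the chord $w=(s-1)/(\kappa-1)$ — information that pointwise bounds on $w$ and $w'$ do not contain. (Equivalently, in your "linear ODE for $w''$" the bracket vanishes identically at $s=\kappa$ when you insert $w(\kappa)=1$, $w'(\kappa)=1/(6(\kappa-1))$, so its sign near the endpoint is a cancellation, not something of fixed sign.) The paper closes exactly this point with a non-elementary input: by the argument principle (Remark \ref{r5}, i.e. Proposition \ref{p4} of \cite{GI}) the combination $(1-s)J_1(s)+(\kappa-1)J_2(s)$ has at most two zeros in $\mathbb{C}\setminus(-\infty,1]$, one of which is $s=\kappa$, and a parity argument from the endpoint expansions then forces $\eta_1(s)=V_1(s,w(s))>0$ on $(1,\kappa)$; the factor $V_2$ is handled by a max/min analysis on $\mathcal{D}$ plus endpoint asymptotics to get strict negativity along the curve. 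Without some substitute for this step (a Chebyshev-type or comparison argument pinning the graph of $w$ above the chord), your proposed proof of $w''<0$ does not go through.
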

\begin{proof}
It follows from \eqref{PF2} that $w(s)$ satisfies
\begin{equation}\label{r2}
6(s-1)(s-\kappa)w'=1-s+2(s-1)w-(\kappa-1)w^2\triangleq U(s,w).
\end{equation}
Note that $U(s,w)$, the right hand of \eqref{r2}, is a quadratic polynomial of $w$.
 Since $4(s-1)^2+4(\kappa-1)(1-s)=4(s-1)(s-\kappa)<0$ for $s\in(1,\kappa)$ and $-(\kappa-1)<0$, we have $U(s,w)<0$. This yields
 $w'(s)>0$ for $s\in(1,\kappa)$. The inequality $0<w(s)<1$ follows from Lemma \ref{l11} and Lemma \ref{l12}.

Differentiating both sides of   \eqref{r2}, one gets
\begin{equation*}
6(s-1)(s-\kappa)w''=\left(-6(2s-\kappa-1)+\dfrac{\partial U(s,w)}{\partial w}\right)\dfrac{U(s,w)}{6(s-1)(s-\kappa)}
+\dfrac{\partial U(s,w)}{\partial s},
\end{equation*}
which implies that
\begin{equation}\label{eq39}
\dfrac{d^2w}{ds^2}=\frac{V_1(s,w)V_2(s,w)}{18(s-1)^2(s-\kappa)^2}
\end{equation}
with
\begin{equation*}
V_1(s,w)=(\kappa-1)w-(s-1),\,\,\,V_2(s,w)=(\kappa-1)w^2+(4s-3\kappa-1)w-2(s-1).
\end{equation*}
By Remark \ref{r5} (or the proof of Proposition \ref{p4}\cite{GI}),
we conclude that $(1-s)J_1(s)+(\kappa-1)J_2(s)$ has at most two
zeros in $\mathbb{C}\backslash(-\infty,1]$. Since
$(1-s)J_1(s)+(\kappa-1)J_2(s)$ has a zero at $s=\kappa$,
$(1-s)J_1(s)+(\kappa-1)J_2(s)$ has at most one zero in $(1,\kappa)$.
Noting $J_1(s)\not=0$, we know that the function
\begin{equation*}
\eta_1(s)=V_1(s,w(s))=(1-s)+(\kappa-1)\dfrac{J_2(s)}{J_1(s)}
\end{equation*}
has at most one zero in $(1,\kappa)$. By direct computation $\eta_1(\kappa)=\eta_1(1)=0$, $\lim_{s\rightarrow 1}\eta_1'(s)=+\infty$, $\eta_1'(k)=-5/6$,  which implies that the number of zeros of
$\eta_1(s)$ is even. Therefore $\eta_1(s)$ has no zero in $(1,\kappa)$, which shows that $V_1(s,w(s))>0$ in $(1,\kappa)$.

Let
\begin{equation}\label{D}
{\mathcal D}=\{(s,w)|1\leq s\leq \kappa,\,\,0\leq w\leq 1\}.
\end{equation}
Now we study the two independent variables function $V_2(s,w)$, defined in $\mathcal D$. The equations
$\partial V_2/\partial s=\partial V_2/\partial w=0$ has a unique solution at $(s,w)=((\kappa+1)/2,1/2)\in\mathcal D$. Therefore,
$V_2(s,w)$ has a maximum point  and a minimum point at either $(s,w)=((\kappa+1)/2,1/2)\in\mathcal D$, or the point in the
boundary of $\mathcal D$. Since
\begin{eqnarray*}
V_2(s,0)=-2(s-1)\leq 0,\,\,V_2(s,1)=2(s-\kappa)\leq 0,\,\,V_2(1,w)=(\kappa-1)w(w-3)\leq 0,\\[2ex]
V_2(\kappa,w)=(\kappa-1)(w-1)(w+2)\leq 0,\,\,V_2\left(\dfrac{\kappa+1}{2},\dfrac{1}{2}\right)=-\dfrac{5(\kappa-1)}{4}<0,
\end{eqnarray*}
we have $V_2(s,w)\leq 0$ for $(s,w)\in\mathcal{D}$.

Let $\eta_2(s)=V_2(s,w(s))$. Then $\eta_2(1)=\eta_2(\kappa)=0,\,\lim_{s\rightarrow 1}\eta_2'(s)=-\infty,\eta_2'(\kappa)=5/2>0$,
which implies that $V_2(s,w(s))<0$ in $(1,\kappa)$. It follows from \eqref{eq39} $w''(s)<0$ for $s\in(1,\kappa)$.
\end{proof}

Assume that  $\psi(x_1,x_2,\cdots,x_n)$ and $\phi(x_1,x_2,\cdots,x_n)$ are two polynomials in $x_1,x_2,\cdots,x_n$. Eliminating of the  variable  $x_i$ from the
equations  $\psi(x_1,x_2,\cdots,x_n)=\phi(x_1,x_2,\cdots,x_n)=0$, we get the resultant of $\psi(x_1,x_2,\cdots,x_n)$ and $\phi(x_1,x_2,\cdots,x_n)$, denoted by
$\mathrm{Resultant}(\psi,\phi,x_i)$.
\vskip 0.3cm

\begin{lemma}\label{l16}
 $w'''(s)>0,\,\,s\in(1,\kappa)$.
\end{lemma}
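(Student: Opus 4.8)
\textbf{Proof proposal for Lemma \ref{l16}.}
The plan is to mimic the strategy that worked for $w''(s)<0$ in Lemma \ref{l15}: obtain a closed ODE expression for $w'''(s)$ in terms of $s$ and $w(s)$ alone, factor the numerator, and then show each factor keeps a fixed sign along the curve $\{(s,w(s)):s\in(1,\kappa)\}$. First I would differentiate the identity \eqref{eq39} once more, substituting $w'=U(s,w)/(6(s-1)(s-\kappa))$ from \eqref{r2} wherever a derivative of $w$ appears, to arrive at a rational expression
\begin{equation*}
\dfrac{d^3w}{ds^3}=\dfrac{\Phi(s,w(s))}{\bigl(6(s-1)(s-\kappa)\bigr)^{3}},
\end{equation*}
where $\Phi(s,w)$ is an explicit polynomial in $s$ and $w$. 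Since the denominator is negative on $(1,\kappa)$ raised to an odd power, proving $w'''>0$ amounts to showing $\Phi(s,w(s))<0$ there. I expect $\Phi$ to factor, just as the numerator in \eqref{eq39} split as $V_1V_2$; plausibly $V_1(s,w)=(\kappa-1)w-(s-1)$ again divides $\Phi$, because $V_1$ divides $d^2w/ds^2$ and differentiating reproduces that factor together with lower-order terms, leaving a residual polynomial factor $V_3(s,w)$ of degree $\leq 3$ in $w$.

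Once the factorization $\Phi=V_1\cdot V_3$ (or a similar short product) is in hand, the first factor is already controlled: $V_1(s,w(s))=\eta_1(s)>0$ on $(1,\kappa)$ was established inside the proof of Lemma \ref{l15}. So everything reduces to the sign of the remaining factor $V_3(s,w(s))$. Here I would run the same two-variable argument used for $V_2$: restrict $V_3(s,w)$ to the rectangle $\mathcal D=\{1\leq s\leq\kappa,\ 0\leq w\leq 1\}$ of \eqref{D}, locate its critical points by solving $\partial V_3/\partial s=\partial V_3/\partial w=0$ (using Resultant to eliminate one variable as set up just before the lemma), check the value of $V_3$ at each interior critical point, and evaluate $V_3$ on the four edges $w=0$, $w=1$, $s=1$, $s=\kappa$. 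If $V_3$ turns out to have a definite sign on all of $\mathcal D$, we are done immediately; if not — which is the likely complication — I would instead use the curve-specific endpoint/derivative information: compute $\eta_3(s):=V_3(s,w(s))$ together with $\eta_3(1)$, $\eta_3(\kappa)$ (expected to vanish, matching $w'''(\kappa)=775/3888(\kappa-1)^3>0$ from Lemma \ref{l11}), and the one-sided derivatives $\lim_{s\to1}\eta_3'(s)$ and $\eta_3'(\kappa)$, invoking the Chebyshev bound from Remark \ref{r5} (the relevant combination $P_1(s)J_1+P_0(s)J_2$ has at most a bounded number of zeros in $\mathbb C\setminus(-\infty,1]$) to cap the number of zeros of $\eta_3$ and then use parity of the zero count to force $\eta_3$ to be sign-definite on $(1,\kappa)$.

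The main obstacle I anticipate is controlling the residual factor $V_3(s,w(s))$: unlike $V_2$, it may fail to be sign-definite on the whole rectangle $\mathcal D$, so the clean "maximum on the boundary'' argument of Lemma \ref{l15} might not close by itself. In that case the delicate part is the parity/zero-counting step — getting a sharp enough bound on $\#V_3(s,w(s))$ via the argument principle on $\mathbb C\setminus(-\infty,1]$ (as in Proposition \ref{p4}) and then checking that the behaviour at the two endpoints $s=1$ and $s=\kappa$, where $J_1$ has a logarithmic singularity and where all $J_i$ coincide respectively, forces an even zero count that must therefore be zero. A secondary nuisance is purely computational: the differentiations and substitutions that produce $\Phi(s,w)$ are lengthy, and the resultant computation eliminating $w$ (or $s$) to verify there is no interior zero of $V_3$ on $\mathcal D$ will have to be carried out carefully, but these are routine once the structural factorization is identified.
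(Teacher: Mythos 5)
Your reduction is the right one and matches the paper: write $w'''=\Phi(s,w(s))/(c(s-1)^3(s-\kappa)^3)$ with $\Phi$ polynomial, note the denominator is negative on $(1,\kappa)$, and prove $\Phi(s,w(s))<0$. But the structural step you lean on is false: $V_1(s,w)=(\kappa-1)w-(s-1)$ does \emph{not} divide $\Phi$. Differentiating $V_1V_2/(18(s-1)^2(s-\kappa)^2)$ and substituting \eqref{r2} does not inherit the factor $V_1$; concretely, substituting $w=(s-1)/(\kappa-1)$ into the actual numerator gives
\begin{equation*}
\Phi\!\left(s,\tfrac{s-1}{\kappa-1}\right)=-\dfrac{25\,(s-1)^2(\kappa-s)^2}{\kappa-1}\neq 0\quad\text{for }1<s<\kappa,
\end{equation*}
so there is no residual factor $V_3$, and both branches of your plan collapse as written. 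Your fallback is also not available in the form you invoke it: even if some cubic-in-$w$ factor existed, $\eta_3(s)=V_3(s,w(s))$ corresponds (after clearing $J_1^3$) to a \emph{cubic} form in $(J_1,J_2)$, whereas the Chebyshev bound of Remark \ref{r5} and Proposition \ref{p4} is only established for linear combinations $P_nJ_1+Q_{n-1}J_2$; in Lemma \ref{l15} the argument principle was used only for the factor linear in $w$, while $V_2$ was handled by the rectangle argument, not by zero counting.

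The good news is that the machinery you describe for the hypothetical $V_3$ is exactly what the paper applies to the full quartic $\Phi$ itself, so your proposal is repaired by simply dropping the factorization step. On the closed rectangle $\mathcal D$ of \eqref{D} one computes $\mathrm{Resultant}(\Phi_s,\Phi_w,s)=-8000(\kappa-1)^6(w-1)^2w^2(2w-1)\chi(w)$ (conveniently independent of $\kappa$), uses Sturm's theorem to see $\chi$ has no root in $(0,1)$, so the only interior critical point is $((\kappa+1)/2,1/2)$, where $\Phi=-\tfrac{25}{16}(\kappa-1)^3<0$; the four boundary restrictions $\Phi(1,w)$, $\Phi(\kappa,w)$, $\Phi(s,0)$, $\Phi(s,1)$ are all $\leq 0$ by inspection; hence $\Phi\leq 0$ on $\mathcal D$, and an interior zero would be an interior maximum, contradicting the unique interior critical value being strictly negative. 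This yields $\Phi(s,w)<0$ in the open rectangle and hence $w'''>0$ on $(1,\kappa)$, with no zero-counting or parity argument needed.
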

\begin{proof}
It follows from \eqref{eq39}  and  \eqref{r2} that
\begin{equation*}
w'''(s)=\dfrac{\Phi(s,w)}{108(s-1)^3(s-\kappa)^3},
\end{equation*}
with
\begin{eqnarray*}
\Phi(s,w)&=&-(s-1)^2(20s+\kappa-21)+2(s-1)(15-\kappa+6\kappa^2-29s-11\kappa s+20s^2)w\\[2ex]
&&-2(\kappa-1)(1+18\kappa-19s)(\kappa-s)w^2+6(\kappa-1)^2(1+3\kappa-4s)w^3\\[2ex]
&&-3(\kappa-1)^3w^4.
\end{eqnarray*}
Let $\mathcal{D}$ be the closed rectangle, defined in \eqref{D}. The maximum and minimum for   $\Phi(s,w)$ in ${\mathcal D}$ necessary occurs either  on the boundary of ${\mathcal D}$, or  the
  points inside ${\mathcal D}$ whose coordinates satisfies equations
 \begin{equation}\label{eq41}
 \Phi_s=\dfrac{\partial \Phi(s,w)}{\partial s}=0,\,\,\Phi_w=\dfrac{\partial \Phi(s,w)}{\partial w}=0.
 \end{equation}
 $\Phi_s$ and $\Phi_w$, defined in  \eqref{eq41}, are two polynomials of $s$ with the polynomial coefficients depending on $w$ and $\kappa$.
 Their resultant is
 \begin{equation*}
 \mathrm{ Resultant}\left(\Phi_s,\Phi_w,s\right)=-8000(\kappa-1)^6(w-1)^2w^2(2w-1)\chi(w)
 \end{equation*}
 with
 \begin{equation*}
\chi(w)=-160425 + 316012 w - 314956 w^2 - 2112 w^3 + 1056 w^4.
\end{equation*}
It is nice for our study that $\mathrm{ Resultant}_s\left(\Phi_s,\Phi_w,s\right)$ does not depend on $\kappa$. By Sturm Theorem
$\chi(w)$ has no real zero in $(0,1)$. Taking $w=1/2$ into the first equation of system \eqref{eq41}, we know that
$(s,w)=((k+1)/2,1/2)$ is a solution of system \eqref{eq41} in ${\mathcal D}$.
Direct computation yields that if $(s,w)\in {\mathcal D}$, then
\begin{eqnarray*}
\Phi\left(\dfrac{\kappa+1}{2},\dfrac{1}{2}\right)&=&-\dfrac{25}{16}(\kappa-1)^3<0,\\[2ex]
\Phi(1,w)&=&-3(\kappa-1)^3w^2(12-6w+w^2)\leq 0,\\[2ex]
\Phi(\kappa,w)&=&-3(\kappa-1)^3(w-1)^2(7+4w+w^2)\leq 0,\\[2ex]
\Phi(s,0)&=&-(s-1)^2(20s+\kappa-21)\leq 0,\\[2ex]
\Phi(s,1)&=&-(s-\kappa)^2(-1+21\kappa-20s)\leq 0,
\end{eqnarray*}
which imply that the maximum and minimum for $\Phi(s,w)$ are non-positive. Therefore
  $\Phi(s,w)\leq 0$ for $(s,w)\in {\mathcal D}$.

Assume that there exists the internal point $(s^*,w^*)$ of $\mathcal{D}$ such that $\Phi(s^*,w^*)=0$. Since $\Phi(s,w)\leq 0$,
 $(s^*,w^*)$ must be a maximum point of $\Phi(s,w)$ inside $\mathcal{D}$. However we have shown that the maximum and minimum for
  $\Phi(s,w)$ inside  ${\mathcal D}$ necessary occurs at $(s,w)=((k+1)/2,1/2)$ and $\Phi((\kappa+1)/2,1/2)<0$. This yields contradiction.
  Hence $\Phi(s,w)<0$ for $(s,w)\in\{(s,w)|1<s<\kappa,\,\,0<w<1\}$, which implies that $w'''(s)> 0$ for $s\in(1,\kappa)$.
\end{proof}

\begin{proposition}\label{p17}
Let $\beta_1=1$ and $s\in(1,\kappa)$. The following statements hold.
\begin{itemize}
\item[(i)] If $\beta_0\in(-\infty,(54-23\kappa)/31]$, then $g'''(s)>0$.
\item[(ii)] If $\beta_0\in((54-23\kappa)/31,1)$, then $g'''(s)$ has exactly one zero.
\item[(iii)] If $\beta_0\in[1,+\infty)$, then $g'''(s)<0$.
\end{itemize}
\end{proposition}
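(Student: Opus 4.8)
The plan is to read $g'''$ off the definition $g=P_2+Q_1w$ and then reduce the whole statement to a single monotonicity fact about $w$. Since $\beta_1=1$ the factor $Q_1(s)=s-\beta_0$ is affine and $P_2$ is quadratic, so $P_2'''=Q_1''=Q_1'''=0$, and Leibniz's rule gives
\[
g'''(s)=(s-\beta_0)\,w'''(s)+3\,w''(s)=w'''(s)\,(\sigma(s)-\beta_0),\qquad \sigma(s):=s+3\,\frac{w''(s)}{w'''(s)},
\]
the factorization being legitimate because $w'''(s)>0$ on $(1,\kappa)$ by Lemma \ref{l16}. Hence $g'''$ and $\sigma-\beta_0$ have the same zeros and the same sign, and everything follows from the claim that \emph{$\sigma$ is strictly decreasing on $(1,\kappa)$, with $\lim_{s\to1^{+}}\sigma(s)=1$ and $\sigma(\kappa)=(54-23\kappa)/31$}. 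Granting this claim, observe that $\kappa>1$ forces $(54-23\kappa)/31<1$, so $\sigma$ carries $(1,\kappa)$ onto the open interval $\bigl((54-23\kappa)/31,\,1\bigr)$; therefore, if $\beta_0\le(54-23\kappa)/31$ then $\sigma(s)-\beta_0>0$ on $(1,\kappa)$ and $g'''>0$, which is (i); if $\beta_0\ge1$ then $\sigma(s)-\beta_0<0$ and $g'''<0$, which is (iii); and if $(54-23\kappa)/31<\beta_0<1$ then, by continuity and strict monotonicity, $\sigma(s)=\beta_0$ has exactly one root in $(1,\kappa)$, at which $g'''$ changes sign, which is (ii).

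The two endpoint values in the claim are direct computations. At $s=\kappa$ one inserts $w''(\kappa)=-25/(216(\kappa-1)^2)$ and $w'''(\kappa)=775/(3888(\kappa-1)^3)$ from Lemma \ref{l11}, gets $3w''(\kappa)/w'''(\kappa)=-54(\kappa-1)/31$, and hence $\sigma(\kappa)=\kappa-54(\kappa-1)/31=(54-23\kappa)/31$; this also makes $g'''(\kappa)=w'''(\kappa)\bigl((54-23\kappa)/31-\beta_0\bigr)$ explicit, and pins down exactly when $g'''(\kappa)$ changes sign. Near $s=1$ one uses Lemma \ref{l12}: from $w(s)=-6/\ln(s-1)+\cdots$, differentiating gives $w''(s)\sim-6(s-1)^{-2}\ln^{-2}(s-1)$ and $w'''(s)\sim12(s-1)^{-3}\ln^{-2}(s-1)$ as $s\to1^{+}$, so $w''(s)/w'''(s)\sim-(s-1)/2\to0$ and $\sigma(s)\to1$; the same expansions show $g'''(s)\to+\infty$ as $s\to1^{+}$ whenever $\beta_0<1$.

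It remains to show $\sigma'(s)<0$ on $(1,\kappa)$. Differentiating,
\[
\sigma'(s)=4-3\,\frac{w''(s)\,w^{(4)}(s)}{(w'''(s))^{2}},
\]
so the point is to prove $3\,w''(s)\,w^{(4)}(s)>4\,(w'''(s))^{2}$ there. I would handle this exactly as Lemmas \ref{l15} and \ref{l16} were handled: the Riccati equation \eqref{r2} expresses $w'$ rationally in $(s,w)$, and iterating it one writes $w''$, $w'''$ (these are \eqref{eq39} and the formula in the proof of Lemma \ref{l16}) and $w^{(4)}$ as explicit rational functions of $(s,w)$ whose denominators are powers of $(s-1)(s-\kappa)$, positive on $(1,\kappa)$. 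Clearing denominators turns the desired inequality into a polynomial inequality $W(s,w)>0$ on the closed rectangle $\mathcal D=\{1\le s\le\kappa,\ 0\le w\le1\}$ from \eqref{D}, which suffices since $0<w(s)<1$ there by Lemma \ref{l15}. One then proves $W>0$ on $\mathcal D$ by evaluating $W$ along the four edges of $\mathcal D$ and at its interior critical points, the latter located by forming $\mathrm{Resultant}(W_s,W_w,s)$ and applying Sturm's theorem, precisely as was done with $\mathrm{Resultant}(\Phi_s,\Phi_w,s)$ in Lemma \ref{l16}.

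The main obstacle is this last estimate. Because $w^{(4)}$ enters and $(w''')^{2}$ appears, the polynomial $W(s,w)$ is of substantially higher degree than the $\Phi$ of Lemma \ref{l16} — roughly degree eight in each of $s$ and $w$ — so both the edge evaluations and, above all, the resultant computation are heavy; as in Lemma \ref{l16}, the decisive thing to check is that $\mathrm{Resultant}(W_s,W_w,s)$ factors into manageable pieces, ideally with the $\kappa$-dependence peeling off as a positive factor and leaving a low-degree polynomial in $w$ alone for Sturm's theorem. Everything else — the two endpoint limits, the factorization $g'''=w'''(\sigma-\beta_0)$, and the three case distinctions — is then routine.
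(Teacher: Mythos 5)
Your factorization $g'''(s)=w'''(s)\bigl(\sigma(s)-\beta_0\bigr)$ with $\sigma=s+3w''/w'''$ is exactly the paper's decomposition $g'''=3w'''\Theta$ (your $\sigma-\beta_0$ is $3\Theta$), and your endpoint computations $\sigma(\kappa)=(54-23\kappa)/31$ and $\sigma(1^+)=1$ reproduce the paper's values of $g'''(\kappa)$ and $\lim_{s\to1}g'''(s)$; the case analysis (i)--(iii) from strict monotonicity of $\sigma$ is also the same. The gap is in the only nontrivial step, namely the proof that $\sigma'<0$, i.e. $3w''w^{(4)}>4(w''')^2$ on $(1,\kappa)$. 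You propose to express this via the Riccati equation \eqref{r2} as a rational function of $(s,w)$, clear the positive denominator, and then prove the resulting polynomial inequality $W(s,w)>0$ on the whole rectangle $\mathcal D$ of \eqref{D}, using only $0<w(s)<1$ from Lemma \ref{l15}. That inequality is false on $\mathcal D$: up to a positive factor the numerator is $-\Theta_1(s,w,\kappa)\Theta_2(s,w,\kappa)$ in the notation of the paper's proof, where $\Theta_2\le 0$ throughout $\mathcal D$ but $\Theta_1$ is a quadratic in $w$ with positive leading coefficient $\theta_2(s)$ and two real branches $W^{-}(s)<W^{+}(s)$ that run inside $\mathcal D$ (both behave like small positive multiples of $s-1$ near $s=1$ and tend to $1$ at $s=\kappa$). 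Between these branches $\Theta_1<0$, so $W(s,w)<0$ there; no amount of boundary evaluation, resultants of $W_s,W_w$, or Sturm sequences can establish $W>0$ on $\mathcal D$, because it is simply not true.

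What is actually needed -- and this is the essential extra idea in the paper's proof -- is the finer geometric fact that the solution curve $w=w(s)$ of \eqref{r2} stays strictly above the upper branch $W^{+}(s)$ of $\{\Theta_1=0\}$ on all of $(1,\kappa)$, so that $\Theta_1(s,w(s),\kappa)>0$ along the curve even though $\Theta_1$ changes sign in $\mathcal D$. The paper gets this from the expansions of Lemmas \ref{l11} and \ref{l12} (which give $w(s)>W^{+}(s)$ near both endpoints) together with a non-contact argument using the Riccati field: a putative intersection forces a point where the system $\Theta_1=0$, $\widetilde\Theta_1=\Theta_{1w}U+6(s-1)(s-\kappa)\Theta_{1s}=0$ has a solution, and the computation $\mathrm{Resultant}(\Theta_1,\widetilde\Theta_1,w)=-35083125(\kappa-1)^5(\kappa-s)^5(s-1)^5\theta_2(s)\neq0$ excludes this. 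Your plan never uses more information about $w(s)$ than $0<w(s)<1$, so it cannot reach $\sigma'<0$; as it stands the proposal does not prove Proposition \ref{p17}, and repairing it requires precisely this curve-versus-branch comparison (or some equivalent restriction to the actual orbit of \eqref{r2}).
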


\begin{proof}By direct computation we have
\begin{equation*}
g'''(s)=3w''(s)+(s-\beta_0)w'''(s)=3w'''(s)\Theta(s),
\end{equation*}
 where
\begin{equation*}
\Theta(s)=\dfrac{w''(s)}{w'''(s)}+\dfrac{s-\beta_0}{3}.
\end{equation*}
Therefore,
\begin{equation}\label{eq42}
\dfrac{d\Theta(s)}{ds}=\dfrac{4(w'''(s))^2-3w''(s)w^{(4)}(s)}{3(w'''(s))^2}=
\dfrac{\Theta_1(s,w,\kappa)\Theta_2(s,w,\kappa)}{17496(s-1)^6(s-\kappa)^6w'''^2(s)}
\end{equation}
with
\begin{eqnarray*}
\Theta_1(s,w,\kappa)&=&2(s-1)^2(-9+4\kappa+5s)-(s-1)(36-67\kappa+51\kappa^2
-5s-35\kappa s\\[2ex]
&&+20s^2)w+(\kappa-1)(18-41\kappa+18\kappa^2+5s+5\kappa s-5s^2)w^2\\[2ex]
&\triangleq&\theta_0(s)+\theta_1(s)w
+\theta_2(s)w^2,\\[2ex]
\Theta_2(s,w,\kappa)&=&(s-1)^2(9+7\kappa-16s)-(s-1)(-9-62\kappa+39\kappa^2+80s+16\kappa s\\[2ex]
&&-32s^2)w-(\kappa-1)(-9+55\kappa+18\kappa^2-37s-91\kappa s+64s^2)w^2\\[2ex]
&&+9(\kappa-1)^2(1+\kappa-2s)w^3.
\end{eqnarray*}
 A calculation  shows that $\theta_2(1)=\theta_2(\kappa)=18(\kappa-1)^3>0$. Since
$\theta_2(s)$ is a quadratic polynomial in $s$ with $\lim_{s\rightarrow \pm \infty}\theta_2(s)=-\infty$, we have
 $\theta_2(s)>0$ for $s\in[1,\kappa]$. On the other hand,
 \begin{equation*}
 (\theta_1(s))^2-4\theta_0(s)\theta_2(s)=25(s-\kappa)^2(s-1)^2(81-146\kappa+81\kappa^2-16s-16\kappa s+16s^2).
 \end{equation*}
 It follows from $(16+16\kappa)^2-4(81-146\kappa+81\kappa^2)\cdot 16=-4928(\kappa-1)^2<0$ that
 $81-146\kappa+81\kappa^2-16s-16\kappa s+16s^2>0$, which implies $(\theta_1(s))^2-4\theta_0(s)\theta_2(s)>0$.

 Rewrite $\Theta_1(s,w,\kappa)$ as the form
 \begin{equation*}
\Theta_1(s,w,\kappa)=\theta_2(s)(w-W^+(s))(w-W^-(s)),
 \end{equation*}
where
\begin{equation*}
W^{\pm}(s)=\dfrac{-\theta_1(s)\pm\sqrt{(\theta_1(s))^2-4\theta_0(s)\theta_2(s)}}{2\theta_2(s)}.
\end{equation*}
This gives that
\begin{equation*}
W^+(s)=\dfrac{8(s-1)}{3(\kappa-1)}+\cdots,\,\,\,\,\,W^-(s)=\dfrac{(s-1)}{6(\kappa-1)}+\cdots,
\end{equation*}
as $s\rightarrow 1^+$, and
\begin{eqnarray*}
W^+(s)&=&1+\dfrac{(s-\kappa)}{6(\kappa-1)}-\dfrac{115(s-\kappa)^2}{324(\kappa-1)^2}+\cdots,\\[2ex]
W^-(s)&=&1+\dfrac{8(s-\kappa)}{3(\kappa-1)}+\cdots,
\end{eqnarray*}
as $s\rightarrow \kappa^-$. Therefore it follows from Lemma \ref{l11} and Lemma \ref{l12} that $W^-(s)<W^+(s)< w(s)=J_2(s)/J_1(s)$
as either $s\rightarrow 1^+$, or $s\rightarrow \kappa^-$.

 Since  $W^+(s)< w(s)$ as either $s\rightarrow 1^+$, or
$s\rightarrow \kappa^-$, $w=w(s)$ and $w=W^+(s)$ have at least two
intersection points if there exists. Suppose that $w=w(s)$
intersects  $w=W^+(s)$   at $s_1,s_2$, then $w(s_i)-W^+(s_i)=0$. By
the Mean Value Theorem there exist $s^*$ such that
$w'(s^*)-(W^+)'(s^*)=0$. Noting
$(W^\pm)'(s)=-\Theta_{1s}/\Theta_{1w}$,it follows from  \eqref{r2}
that $s=s^*$ satisfies the following equations
\begin{equation}\label{eq43}
\Theta_1(s,w,\kappa)=0,\,\,\,\widetilde{\Theta}_1(s,w,\kappa)=\Theta_{1w}U(s,w)+6(s-1)(s-\kappa)\Theta_{1s},
\end{equation}
where $\Theta_{1s}=\partial\Theta_{1}(s,w,k)/\partial s$.  Noting
that we have shown $\theta_2(s)>0$ for $s\in(1,\kappa)$, one gets
\begin{equation*}
\mathrm{Resultant}(\Theta_1,\widetilde{\Theta}_1,w)=-35083125(
\kappa-1)^5( k - s)^5( s-1)^5\theta_2(s)\not=0,\,\,s\in(1,\kappa),
\end{equation*}
which implies that two equations in \eqref{eq43} have  no common
zero. Therefore there is no $s^*\in(1,\kappa)$ such that
$w(s^*)=W^{+}(s^*)$. This yields $W^-(s)<W^{+}(s)<w(s)$ for
$s\in(1,\kappa)$. Finally we obtain that
$\Theta_1(s,w(s),\kappa)=2\theta_2(s)(w(s)-W^+(s))(w(s)-W^-(s))>0$
for $s\in(1,\kappa)$.

 Now we consider $\Theta_2(s,w,\kappa)$. Let
 \begin{equation*}
 {\mathcal D}'=\{(s,\kappa)|1\leq s\leq \kappa,\,\,1\leq \kappa\leq c\},\,\,\,c\,\,\mathrm{is\,\,a \,\, real\,\, large\,\,constant},\,\,|c|\gg
 1\},
 \end{equation*}
 be a triangle in the $\kappa s$-plane and fix $w$ as a real constant with $0<w<1$. The maximum and minimum for   $\Theta_2(s,w,\kappa)$ in ${\mathcal D}'$ necessary occurs either  on the boundary of ${\mathcal D}'$, or  the
  points inside ${\mathcal D}'$ whose coordinates satisfies equations
\begin{equation*}
\dfrac{\partial \Theta_2(s,w,\kappa)}{\partial s}=0,\,\,\dfrac{\partial \Theta_2(s,w,\kappa)}{\partial k}=0.
\end{equation*}
Direct computation shows that
\begin{equation*}
\mathrm{Resultant}(\Theta_{2s},\Theta_{2\kappa},w)=-6705(\kappa-1)^2(s-\kappa)^2(s-1)^2\gamma(s,\kappa)
\end{equation*}
with
\begin{eqnarray*}
\gamma(s,\kappa)&=&362313 + 701586 \kappa - 1012697\kappa^2 + 421884 \kappa^3 - 1012697 \kappa^4 +
    701586 \kappa^5 \\[2ex]
    &&+ 362313 \kappa^6-8(1 + \kappa)(359433 - 174116 \kappa - 174026\kappa^2 -
        174116 k^3 \\[2ex]
        && + 359433\kappa^4)s+8(991241 + 22492 \kappa - 1044426\kappa^2 + 22492k^3\\[2ex]
        &&+ 991241\kappa^4)s^2-16384(\kappa+1)(649 - 978 \kappa + 649 \kappa^2)s^3\\[2ex]
        &&+8192(809 - 658 \kappa + 809 \kappa^2)s^4-1572864 (1 +\kappa)s^5+524288s^6.
\end{eqnarray*}
Since
$\mathrm{Resultant}(\gamma_s,\gamma_\kappa,s)=c^*(\kappa-1)^{25}$
with $c^*<0$, the maximum  and  the minimum  for $\gamma(s,\kappa)$
in ${\mathcal D}'$ occurs  on the boundary of ${\mathcal D}'$.
$\gamma(s,\kappa)$ is a polynomial of $\kappa$ with degree $6$ and
the  coefficient of $\kappa^6$ is $362313$, which implies
$\gamma(s,c)>0$ as $c$ is sufficient large enough. Noting
$\gamma(1,\kappa)=\gamma(\kappa,\kappa)=362313(\kappa-1)^6$,
$\gamma(s,\kappa)$ has its minimum value $\gamma(1,1)=0$ at
$(s,\kappa)=(1,1)$. This yields $\gamma(s,\kappa)>0$ for
$(s,\kappa)\in\mathcal{D}'\backslash{(1,1)}$.

 Therefore $\mathrm{Resultant}(\Theta_{2s},\Theta_{2\kappa},w)\not=0$ for
$(s,\kappa)\in{\mathcal D}'\backslash(\{s=1\}\cup\{s=\kappa\})$,
which implies that  the maximum and minimum for
$\Theta_2(s,w,\kappa)$ in ${\mathcal D}'$ necessary occurs  on the
boundary of ${\mathcal D}'$. If $0< w< 1$ and
$(s,\kappa)\not=(1,1)$, then
 \begin{equation*}
\Theta_2(1,w,\kappa)=9(\kappa-1)^3w^2(w-2)< 0,\,\,\Theta_2(\kappa,w,\kappa)=-9(\kappa-1)^3(w-1)^2(w+1)< 0.
 \end{equation*}
Noting that $\Theta_2(1,w,\kappa)$ is a polynomial in $\kappa$ and
the coefficient of the highest order term $\kappa^3$ is
$9w^2(w-2)<0$, we have $\Theta_2(s,w,c)<0$, provided that $c$ is
sufficient large enough and $0<w<1$. Summing the above discussions
and noting $\Theta_2(1,w,1)=0$, one gets $\Theta_2(s,w,\kappa)$  has
its maximum value zero at $(s,\kappa)=(1,1)$ in ${\mathcal D}'$.
Since we always suppose that $\kappa>1$ in this paper,
$\Theta_2(s,w(s),\kappa)< 0$ for $s\in(1,\kappa)$.

It follows from \eqref{eq42} that $\Theta'(s)<0$. This yields that $g'''(s)$ has at most one zero in $(1,\kappa)$. On the other hand,
 Lemma \ref{l11} and Lemma \ref{l12} give
\begin{equation*}
\lim_{s\rightarrow 1}g'''(s)=\left\{\begin{array}{cl}
+\infty,&\mathrm{if}\,\,\,\beta_0<1,\\[2ex]
-\infty,&\mathrm{if}\,\,\,\beta_0\geq 1,
\end{array}\right.
\,\,\,\,g'''(k)=-\dfrac{25(23\kappa+31\beta_0-54)}{3888(\kappa-1)^3},
\end{equation*}
which implies the  assertions of this proposition.
\end{proof}

\begin{corollary}\label{c18}
Let $s\in(1,\kappa)$ and $\beta_1=1$.
\begin{itemize}
\item[(a)] If $\beta_0\in(-\infty,(23\kappa-54)/31]\cup[1,+\infty)$, then $g(s)$ has at most one inflection
points.
\item[(b)] If $\beta_0\in((23\kappa-54)/31,1)$, then $g(s)$ has at most two inflection
points.
\end{itemize}
\end{corollary}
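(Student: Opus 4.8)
The plan is to deduce the statement directly from Proposition \ref{p17} by relating the inflection points of $g(s)$ to the zeros of $g'''(s)$, together with the sign information on $g''$ at the endpoints. Recall that an inflection point of $g$ is a point where $g''$ changes sign; since $g''$ is real-analytic on $(1,\kappa)$, each such point is a zero of $g''$, and between two consecutive inflection points $g''$ must attain a local extremum, i.e.\ a zero of $g'''$. Hence the number of inflection points of $g$ is at most one more than the number of zeros of $g'''$ on $(1,\kappa)$, provided we control the behaviour of $g''$ near the endpoints so that no ``extra'' sign change can hide at $s=1$ or $s=\kappa$.

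First I would compute $g''(s) = 2w'(s) + (s-\beta_0)w''(s)$ and record its endpoint behaviour using Lemma \ref{l11} and Lemma \ref{l12}. At $s=\kappa$ one gets an explicit finite value from $w'(\kappa)=1/(6(\kappa-1))$ and $w''(\kappa)=-25/(216(\kappa-1)^2)$; at $s=1^+$ the logarithmic asymptotics of $w$ give $g''(s)\to\pm\infty$ with a sign depending on the sign of $1-\beta_0$ (compare the computation of $\lim_{s\to1}g'''(s)$ in the proof of Proposition \ref{p17}). In case (a), where $\beta_0\in(-\infty,(23\kappa-54)/31]$, Proposition \ref{p17}(i) gives $g'''(s)>0$ throughout $(1,\kappa)$, so $g''$ is strictly increasing and therefore has at most one zero; the case $\beta_0\in[1,+\infty)$ is symmetric via Proposition \ref{p17}(iii), with $g''$ strictly decreasing. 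Either way $g''$ is strictly monotone on $(1,\kappa)$, hence $g$ has at most one inflection point, proving (a). In case (b), Proposition \ref{p17}(ii) gives that $g'''$ has exactly one zero $s_0\in(1,\kappa)$, so $g''$ is monotone on $(1,s_0)$ and monotone on $(s_0,\kappa)$ (with opposite monotonicities); thus $g''$ has at most two zeros, and $g$ has at most two inflection points, proving (b).

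The only subtlety — and the place where one must be slightly careful rather than merely formal — is making sure that, when $g''$ blows up at $s=1$, this does not produce an additional inflection point beyond the count given by the monotonicity intervals of $g''$. This is handled by noting that the sign of $g''$ near $s=1$ is determined by the leading logarithmic term and is constant on a punctured neighbourhood of $1$; combined with strict monotonicity of $g''$ on each subinterval furnished by Proposition \ref{p17}, the total number of sign changes of $g''$ on $(1,\kappa)$ is bounded by the number of monotonicity intervals, which is $1$ in case (a) and $2$ in case (b). I do not expect any genuine obstacle here: once Proposition \ref{p17} is in hand, the corollary is a short qualitative argument about a monotone (or piecewise-monotone) function.
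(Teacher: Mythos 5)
Your proposal is correct and takes essentially the same route as the paper: zeros of $g'''(s)$, counted via Proposition \ref{p17}, bound the monotonicity intervals of $g''(s)$, and hence the number of sign changes of $g''$, i.e.\ inflection points of $g$. One trivial slip: since $P_2(s)=\alpha_2 s^2+\alpha_1 s+\alpha_0$, in fact $g''(s)=2\alpha_2+2w'(s)+(s-\beta_0)w''(s)$ rather than $2w'(s)+(s-\beta_0)w''(s)$, but this constant is immaterial because your core argument uses only the piecewise monotonicity of $g''$ furnished by Proposition \ref{p17}.
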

\begin{proof}
Note that the zero of $g'''(s)$ is the maximum or minimum point of $g''(s)$ and $g(\kappa)=0$. The assertions of this
proposition follows from Proposition \ref{p17}.
\end{proof}

\begin{proof}[Proof of Theorem \ref{t14}]

Firstly  we note that $g(\kappa)=0$.

If $\beta_1=1$, then the statement (a) and (b) follows from Corollary \ref{c18}.

If  $\beta_1=0,\beta_0\not=0$, then   Lemma \ref{l16} shows that
$g'''(s)=\beta_0w'''(s)\not=0$, which implies that $g(s)$ has at
most one inflection point. This yields the statement (c).

If $\beta_0=\beta_1=0$, then $g(s)=P_2(s)=(s-\kappa)(\alpha_1+\kappa\alpha_2+\alpha_2 s)$. The assertion (d) follows.
\end{proof}

In the end of this section, we prove Theorem \ref{t1}.
\begin{proof}[Proof of Theorem \ref{t1}] Proposition \ref{p13} and Theorem \ref{t14} show  that $I(s)$ has at most
five zeros in $(1,\kappa)$. This implies that the perturbed system \eqref{eq2} has at most five limit cycles which emerge from the period annulus
around the center. The second assertion of Theorem \ref{t1} follows from Corollary \ref{c9}.
\end{proof}

\section{Comments}

Zoladek  conjectured that the exact upper bound of the cyclicity  of
the period annulus for $Q_4$ is three\cite{I1,Zo}.  Unfortunately
we can not
 prove Zoladek's conjecture in this paper.

As we mentioned before, the argument principle gives a shorter proof of Theorem \ref{t1}. However it seems clear to us that it
does not allow to go further in Zoladek's conjecture. Our approach is perhaps more involved from the computational point of view,
 but we think that it may  provide a way to attack the problem in the future paper. For instance, we can get the following results from
 Lemma \ref{l15}, Lemma \ref{l16} and Corollary \ref{c18}.

1.  If $\beta_1=1,\,\,\beta_0<1,\,P_2(\beta_0)\leq 0$ (resp. $\beta_0>\kappa,\,P_2(\beta_0)\geq 0$), then
$(P_2(s)/Q_1(s))''+w''(s)=P_2(\beta_0)/(s-\beta_0)^3+w''(s)<0$. This implies that $g(s)$ has
at most one zero in $(1,\kappa)$. Therefore $I(s)$ has three zeros in the same interval.

2. If  $\beta_1=1,\,\,\beta_0<1,\,0<P_2(\beta_0)<25(1-\beta_0)/(432(\kappa-1)^2)$, then
$(P_2(s)/Q_1(s))''+w''(s)=P_2(\beta_0)/(s-\beta_0)^3+w''(s)\leq 0$. This yields $I(s)$ has three zeros in $(1,\kappa)$.

3. If
$\beta_1=1,\,\,\beta_0\in(-\infty,(23\kappa-54)/31]\cup[1,+\infty),\,\,P_2(1)g'(\kappa)>0$,
then $g(s)$ has at most one zero. Hence $I(s)$ has at most three
zeros.

Here we just list the  partial results we have proved. We wish that
the above results will be helpful for proving Zoladek's conjecture
in the future paper.

\end{document}